\theoremstyle{plain}
\theoremstyle{plain}
\newtheorem{theorem}{Theorem}[section]
\newtheorem{corollary}[theorem]{Corollary}
\newtheorem{lemma}[theorem]{Lemma}
\newtheorem{proposition}[theorem]{Proposition}
\theoremstyle{definition}
\newtheorem{definition}[theorem]{Definition}
\newtheorem{remark}[theorem]{Remark}
\newcommand*\bigcdot{\mathpalette\bigcdot@{.5}}
\newcommand*\bigcdot@[2]{\mathbin{\vcenter{\hbox{\scalebox{#2}{$\m@th#1\bullet$}}}}}
\renewcommand{\leq}{\leqslant}
\renewcommand{\geq}{\geqslant}
\newcommand{\N}{\mathbb{N}}
\newcommand{\Z}{\mathbb{Z}}
\renewcommand{\S}{\mathcal{S}}
\newcommand{\ind}[1]{\mathbf{1}_{\left\{#1\right\}}}
\newcommand{\floor}[1]{{\left\lfloor #1 \right\rfloor}}
\DeclareMathOperator{\E}{\mathbf{E}}
\renewcommand{\P}{\mathbf{P}}
\renewcommand{\bar}[1]{\overline{#1}}
\renewcommand{\tilde}[1]{\widetilde{#1}}
\renewcommand{\epsilon}{\varepsilon}
\renewcommand{\phi}{\varphi}
\newcommand{\T}{\mathbb{T}}
\renewcommand{\hat}[1]{\widehat{#1}}
\newcommand{\defeq}{:=}
\title{Freezing in the Infinite-Bin Model}
\author{
Bastien Mallein\thanks{Institut de Mathématiques de Toulouse, UMR 5219 - Université de Toulouse, France.\\ Email: \texttt{bastien.mallein@math.univ-toulouse.fr}} \and Sanjay Ramassamy\thanks{Universit\'e Paris-Saclay, CNRS, CEA, Institut de Physique Th\'eorique, France.\\ Email: \texttt{sanjay.ramassamy@ipht.fr}} \and Arvind Singh\thanks{Université Paris-Saclay, CNRS, Laboratoire de Mathématiques d'Orsay, France.\\ Email: \texttt{arvind.singh@universite-paris-saclay.fr}}}
\date{\today}
\begin{document}

\maketitle

\begin{abstract}
The infinite-bin model is a one-dimensional particle system on $\Z$ introduced by Foss and Konstantopoulos in relation with last passage percolation on complete directed acyclic graphs. In this model, at each integer time, a particle is selected at random according to its rank and produces a child at the location immediately to its right. In this article, we consider the limiting distribution of particles after an infinite number of branching events have occurred. Under mild assumptions, we prove that the event (called freezing) that a location contains only a finite number of balls satisfies a $0-1$ law and we provide various criteria to determine whether freezing occurs.
\end{abstract}

\section{Introduction}
\label{sec:introduction}

The infinite-bin model (IBM) is a discrete-time particle system on $\Z$ introduced by Foss and Konstantopoulos~\cite{FK}. In this process, at each integer time, a particle chosen according to its rank reproduces by creating a newborn particle one step to its right. Informally, it is constructed as a balls and bins scheme,  by placing a bin at each site of $z\in\Z$ with a certain number of balls, corresponding to the particles at initial time. We assume that all the bins at location $z$ large enough are initially empty, so it is possible to number all the balls in the system starting from the rightmost one. We define the dynamics of the infinite-bin model as follows. Let $\mu$ be a probability distribution on $\N \defeq \{1,2,\ldots\}$. At each integer time, the $k$th rightmost ball is selected with probability $\mu(k)$ and a ball is added to the bin immediately on its right. See Figure~\ref{fig:IBM} for an illustration of the evolution of the IBM.

\bigskip

The special case when $\mu$ is the uniform measure on $\{1,\ldots,N\}$ for some integer $N\geq1$ first appeared in \cite{AP}. It corresponds to a continuous-time branching random walk with selection. The case of general $\mu$ can be seen as a ranked-biased branching random walk with selection (see Remark~\ref{rem:genealogical}). The case when $\mu$ is a geometric distribution was studied by \cite{FK}, as it is coupled to a model of last-passage percolation on the complete directed acyclic graph where the edge weights may take values $1$ or $-\infty$. Considering more general laws for edge weights in the last-passage percolation model leads to the definition of max-growth systems \cite{FKMR}, generalizing the IBM with geometric distribution. All these particle systems fall into the class of processes with long memory, see e.g.~\cite{CFF}, or \cite{FoKoMaRa2023} for a review on infinite-bin models.

\bigskip

We introduce some notation needed to define the infinite-bin model formally. The state space of the IBM is defined as
\[
  \S \defeq \left\{ X : \Z \rightarrow \Z_+\cup \{\infty\} \text{ such that } \sup\{ j \in \Z : X(j) \neq 0 \} \in (-\infty,\infty) \right\}.
\]
An element $X\in \S$ represents a \emph{configuration of balls} where, for each $k \in \Z$, the bin at position $k$ contains $X(k)$ balls.

Let us point out that the total number of balls in a configuration $X \in \S$ may be finite or infinite (but cannot be zero). We even allow the number of balls in a given bin $k$ to be infinite i.e. $ X(k) = \infty$. For the dynamics of the infinite bin model, such a bin will represent an impassable barrier (the configuration on the left of this bin will not matter and remain unchanged forever). A configuration $X \in \S$ is said to be
\begin{itemize}
\item \emph{locally finite} if $X(k) < \infty$ for all $k\in \Z$,
\item \emph{bounded} if $\sup_{k\in \Z} X(k) < \infty$.
\end{itemize}

Given $\mu$ a probability distribution on $\N$ and $X_0 \in \S$ an initial configuration, an infinite-bin model $X$ with reproduction law $\mu$ starting from $X_0$ is constructed as an $\S$-valued Markov process in the following fashion. Given $(\xi_n, n \in \N)$ i.i.d. random variables with law $\mu$, at each integer time $n$, $X_n$ is constructed from $X_{n-1}$ by adding a new ball to the bin immediately to the right of the bin containing the $\xi_{n}$th rightmost ball. In other words, writing $J_{n-1}$ for the index of the bin containing the $\xi_{n}$th rightmost ball at time $n-1$, defined by
\[
  \sum_{j = J_{n-1}}^\infty X_{n-1}(j) \geq \xi_{n} > \sum_{j=J_{n-1} +1}^\infty X_{n-1}(j),
\]
we set\footnote{If such an index does not exist, which is the case if the configuration $X_{n-1}$ contains fewer than $\xi_{n}$ balls, we just set $X_{n}=X_{n-1}$.} $X_{n}(\cdot) = X_{n-1}(\cdot) + \ind{\cdot = J_{n-1}+1}$.

In recent years, all the articles studying infinite-bin models, such as \cite{FK,FZ,CR,MR17,MR16, Terlat}, focused on studying the displacement of the front at time $n$, i.e. the position of the rightmost occupied bin in $X_n$, as well as the asymptotic behavior of this quantity as $n \to \infty$. In these articles, it is therefore natural to study the infinite-bin model ``seen from the front'', and to describe the statistical properties of the largest non-empty bins.

In contrast, we focus here on a different question and study instead how the number of balls in a given bin grows as time passes. Obviously, for all $k \in \Z$, the number $X_n(k)$ of balls in the bin at position $k$ at time $n$ is non-decreasing in $n$ (since balls are never removed from bins). Hence we can define the random variables
\[
  X_\infty(k) = \lim_{n \to \infty}\uparrow X_n(k) \in \Z_+ \cup \{\infty\} \quad \text{a.s.}
\]
We call the event $\{X_\infty(k) < \infty\}$ the \emph{freezing} of bin $k$, as it means that, on this event, there is some finite (random) time after which no ball is ever added to bin $k$. This paper aims to describe conditions on the distribution $\mu$ and the initial configuration $X_0$ for which freezing occurs.

\medskip

As an appetizer, let us first note that the question above is readily answered when  $\mu$ has a first moment:
\begin{proposition}
\label{prop:strongFreezing}
Suppose that the distribution $\mu$ satisfies $\sum n\mu(n) < \infty.$ Then for any initial configuration $X_0 \in \S$ and any $k\in\Z$ such that $X_0(k) < \infty$, we have $X_\infty(k) < \infty$ a.s.
\end{proposition}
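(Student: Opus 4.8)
The plan is to track the mass weakly to the right of $k$, that is $S_n(i) := \sum_{j \ge i} X_n(j)$ for $i \in \{k-1, k\}$, and to exploit the fact that a ball enters bin $k$ precisely at those steps where $n \mapsto S_n(k)$ makes a unit jump \emph{and} the selection index $\xi_n$ exceeds $S_{n-1}(k)$. We first dispose of a degenerate case: if $X_0(p) = \infty$ for some $p \ge k$, then $S_n(k) = \infty$ for all $n$, so the inequality $\xi_n > S_{n-1}(k)$ needed to place a ball in bin $k$ can never hold, and $X_\infty(k) = X_0(k) < \infty$. In the remaining cases $X_0(j) < \infty$ for all $j \ge k$, and since $\sup\{j : X_0(j) \neq 0\} < \infty$ the sum $S_0(k) = \sum_{j \ge k} X_0(j)$ is finite; consequently $S_n(k) \le S_0(k) + n < \infty$ for all $n$, and $(S_n(k))_n$ is non-decreasing with increments in $\{0,1\}$.

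The combinatorial core is the following. The ball created at step $n$ lands in bin $J_{n-1}+1$ with $J_{n-1} = \max\{i : S_{n-1}(i) \ge \xi_n\}$, and since $i \mapsto S_{n-1}(i)$ is non-increasing one gets the two equivalences $\{J_{n-1} \ge k-1\} = \{\xi_n \le S_{n-1}(k-1)\}$ (which is exactly the event that $S_\cdot(k)$ increases at step $n$) and $\{J_{n-1} \le k-1\} = \{\xi_n > S_{n-1}(k)\}$. Hence a ball is added to bin $k$ at step $n$ iff $S_\cdot(k)$ increases at step $n$ \emph{and} $\xi_n > S_{n-1}(k)$. Setting $\sigma_j := \inf\{n \ge 0 : S_n(k) = j\} \in \N \cup \{\infty\}$ for $j \ge S_0(k)$ and $Y_j := \ind{\sigma_{j+1} < \infty}\,\ind{\xi_{\sigma_{j+1}} > j}$, the unit jumps of $S_\cdot(k)$ occur exactly at the finite times in $\{\sigma_{j+1}\}_{j}$, at which $S_{\sigma_{j+1}-1}(k) = j$; this yields the identity (in $\Z_+\cup\{\infty\}$)
\[
  X_\infty(k) - X_0(k) = \sum_{j \ge S_0(k)} Y_j ,
\]
so it is enough to show the right-hand side has finite expectation.

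To estimate $\E[Y_j]$, decompose over the value of $\sigma_{j+1}$, using $\{\sigma_{j+1} = n\} = \{S_{n-1}(k) = j\} \cap \{\xi_n \le S_{n-1}(k-1)\}$. The event $\{S_{n-1}(k) = j\}$ lies in $\calF_{n-1} := \sigma(\xi_1,\dots,\xi_{n-1})$, the variable $\xi_n$ is independent of $\calF_{n-1}$, and on $\{S_{n-1}(k)=j\}$ one has $S_{n-1}(k-1) \ge j$; conditioning on $\calF_{n-1}$ therefore gives $\P(\sigma_{j+1}=n,\ \xi_n>j) \le \P(\xi_1>j)\,\P(S_{n-1}(k)=j)$, and summing over $n$,
\[
  \E[Y_j] \le \P(\xi_1 > j)\; \E\big[\#\{n \ge 0 : S_n(k) = j\}\big].
\]
The sojourn time at level $j$ is in turn controlled by a geometric estimate: while $S_\cdot(k) = j$ we have $S_{n-1}(k-1) \ge j$, so at each step the process leaves level $j$ with conditional probability $\ge \P(\xi_1 \le j)$, whence $\E[\#\{n \ge 0 : S_n(k) = j\}] \le 1/\P(\xi_1 \le j)$. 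Since $\xi_1 < \infty$ almost surely there is a deterministic $j_0$ with $\P(\xi_1 \le j) \ge 1/2$ for all $j \ge j_0$, so $\E[Y_j] \le 2\,\P(\xi_1 > j)$ for $j \ge j_0$, and finally
\[
  \E\big[X_\infty(k) - X_0(k)\big] = \sum_{j \ge S_0(k)} \E[Y_j] \le j_0 + 2\sum_{j \ge 1}\P(\xi_1 > j) = j_0 + 2\Big(\sum_n n\mu(n) - 1\Big) < \infty ,
\]
using the hypothesis $\sum_n n\mu(n) < \infty$. Hence $X_\infty(k) - X_0(k) < \infty$ almost surely.

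The step I expect to demand the most care is the passage to the random time $\sigma_{j+1}$: because $\{\sigma_{j+1} = n\}$ itself depends on $\xi_n$, one cannot treat $\xi_{\sigma_{j+1}}$ as a fresh $\mu$-sample, and the cruder bound ``a ball enters bin $k$ at step $n$ only if $\xi_n > S_{n-1}(k)$'' is too weak, since $\sum_n \ind{\xi_n > S_{n-1}(k)}$ may diverge when $S_\cdot(k)$ stays bounded. The remedy is the decomposition above together with the fact that $S_\cdot(k)$ spends little time at a level once the $\mu$-tail beyond it is small — which is exactly the channel through which the first-moment assumption forces $\sum_j \E[Y_j]$ to converge; the remaining ingredients (the two equivalences for $J_{n-1}$, the identity for $X_\infty(k)-X_0(k)$, and the geometric domination of the sojourn time) are all elementary.
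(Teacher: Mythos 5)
Your argument is correct, and it is exactly the Borel--Cantelli route the paper alludes to (the paper states the result without proof, calling it ``an easy consequence of the Borel-Cantelli lemma''); it also mirrors the decomposition used in the paper's proof of Proposition~\ref{prop:bounded}, where the successive balls added weakly to the right of bin $k$ are indexed and the $j$th one is shown to fall into bin $k$ with conditional probability controlled by the tail $\bar{\mu}(j)$. Your handling of the measurability issue at the random times $\sigma_{j+1}$ --- bounding $\E[Y_j]$ by $\P(\xi_1>j)$ times the expected sojourn time at level $j$, itself geometrically dominated --- correctly supplies the one detail the paper's one-line justification glosses over.
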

This result is an easy consequence of the Borel-Cantelli lemma. It also follows directly from the existence of renovation events for the IBM first introduced by Foss and Konstantopoulos in \cite{FK} and extended in \cite{CR, FZ}.

\medskip

Taking a different direction, instead of making assumptions on $\mu$, we can rather make assumptions on the initial configuration $X_0$ which will ensure freezing for any distribution $\mu$.
\begin{theorem}\label{thm:unifbounded}
Let $X_0 \in \S$ be a bounded configuration. For any distribution $\mu$, we have $X_\infty(k) < \infty$ a.s. for all $k\in \Z$.
\end{theorem}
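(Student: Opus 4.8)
Since $\{X_\infty(k)<\infty\text{ for all }k\}$ is a countable intersection, the plan is to fix $k\in\Z$ and show $X_\infty(k)<\infty$ a.s. The engine will be a \emph{left-to-right propagation}: for any initial configuration with $S_0(k):=\sum_{j\geq k}X_0(j)<\infty$ (which holds here for every $k$ since $X_0$ has finite right-support), I would show that, up to a null set,
\[
\{X_\infty(k-1)<\infty\}\subseteq\{X_\infty(k)<\infty\}.
\]
Granting this, the theorem follows at once for any configuration having a leftmost non-empty bin $\ell$: bin $\ell-1$ and all bins to its left stay empty forever, so bin $\ell$ never receives a ball and $X_\infty(\ell)=X_0(\ell)<\infty$; iterating the propagation gives $X_\infty(k)<\infty$ for every $k\geq\ell$, while bins $<\ell$ remain empty.

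\textbf{Proof of the propagation.} Write $S_n(k)=\sum_{j\geq k}X_n(j)$ and $\bar\mu(x)=\mu((x,\infty))$. Bin $k$ receives a ball at step $n+1$ exactly on $A_{n+1}=\{S_n(k)<\xi_{n+1}\leq S_n(k-1)\}$, so $\P(A_{n+1}\mid\mathcal F_n)=\bar\mu(S_n(k))-\bar\mu(S_n(k-1))$. By the conditional (Lévy) Borel--Cantelli lemma it suffices to prove $\sum_n\P(A_{n+1}\mid\mathcal F_n)<\infty$ a.s.\ on the event $\{X_m(k-1)\leq B\ \text{for all }m\}$, and then let $B\to\infty$, since $\{X_\infty(k-1)<\infty\}$ is the increasing union over $B$ of these events. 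On that event $S_n(k-1)\leq S_n(k)+B$, hence $\P(A_{n+1}\mid\mathcal F_n)\leq\bar\mu(S_n(k))-\bar\mu(S_n(k)+B)$. If $S_\infty(k)<\infty$ then necessarily $S_\infty(k-1)<m:=\min\supp\mu$ (otherwise for all large $n$ the selected ball lies in bins $\geq k-1$ with conditional probability at least $\mu(\{m\})$, forcing $S_n(k)$ to keep growing), so the summand vanishes for large $n$. If $S_\infty(k)=\infty$, regroup according to the value $v$ of $S_n(k)$: for the finitely many $v<m$ the multiplicity $N_v:=\#\{n:S_n(k)=v\}$ is finite since $S(k)\to\infty$; and for $v\geq m$, at each visit to level $v$ the process escapes it with conditional probability $\mu([1,S_n(k-1)])\geq\mu([1,v])\geq\mu(\{m\})>0$, so $N_v$ is stochastically dominated by $1+\mathrm{Geom}(\mu(\{m\}))$ and $\E[N_v]\leq 1+\mu(\{m\})^{-1}$. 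Consequently
\[
\E\Big[\sum_{v\geq m}N_v\big(\bar\mu(v)-\bar\mu(v+B)\big)\Big]\leq\big(1+\mu(\{m\})^{-1}\big)\sum_{v\geq m}\big(\bar\mu(v)-\bar\mu(v+B)\big)=\big(1+\mu(\{m\})^{-1}\big)\sum_{v=m}^{m+B-1}\bar\mu(v)<\infty,
\]
using telescoping and $\bar\mu(v)\to 0$, so the sum is a.s.\ finite.

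\textbf{General bounded configurations and the main obstacle.} For a bounded $X_0$ extending infinitely to the left I would approximate by the truncations $X_0^{(L)}(j):=X_0(j)\ind{j\geq -L}$, each of which is bounded with a leftmost non-empty bin, hence satisfies $X_\infty^{(L)}(k)<\infty$ a.s.\ by the previous paragraphs. Coupling every $X^{(L)}$ with $X$ through the same innovations $(\xi_n)$, for each fixed $n$ one has $X_n^{(L)}=X_n$ on bins $\geq -L$ once $L$ is large (the first $n$ selected balls sit at finite depth and $S_0(-L)\to\infty$), so $X_n(k)\leq X_\infty^{(L)}(k)$ for all such $L$, whence $X_\infty(k)\leq\liminf_{L\to\infty}X_\infty^{(L)}(k)$ a.s. The crux — which I expect to be the hard part — is then to establish tightness of the family $\big(X_\infty^{(L)}(k)\big)_L$, i.e.\ $\sup_L\P\big(X_\infty^{(L)}(k)>B\big)\to 0$ as $B\to\infty$; Fatou's lemma applied to $\{X_\infty^{(L)}(k)>B\}$ would then give $\P(X_\infty(k)=\infty)\leq\limsup_L\P(X_\infty^{(L)}(k)>B)$ for every $B$, hence $\P(X_\infty(k)=\infty)=0$. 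I would try to obtain this uniform tail bound by making the propagation estimate quantitative, controlling the number of balls bin $k$ receives in terms of $M$, of $\mu$, and of the rate at which the tails $S_n^{(L)}(\cdot)$ separate around bin $k$, uniformly in the truncation level $L$; the uniform bound $X_0\leq M$ on the initial occupation of every bin is what I would rely on to keep these quantities under control independently of $L$.
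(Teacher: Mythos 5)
There is a genuine gap, and you have correctly located it yourself: the tightness of $\bigl(X_\infty^{(L)}(k)\bigr)_L$ uniformly in the truncation level $L$ is not an afterthought but the actual content of the theorem, and your proposal does not supply it. Your propagation step $\{X_\infty(k-1)<\infty\}\subseteq\{X_\infty(k)<\infty\}$ (which is essentially Corollary~\ref{cor:cascadefinite}, proved in the paper by a different route, via the finite expected progeny of each ball in Lemma~\ref{lem:finiteOffspring}) is purely qualitative: conditionally on $\{X_\infty(k-1)\leq B\}$ it bounds the expected number of arrivals in bin $k$ by roughly $\bigl(1+\mu(\{m\})^{-1}\bigr)\sum_{v=m}^{m+B-1}\bar{\mu}(v)$, which grows linearly in $B$ with a prefactor that can exceed $1$. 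Iterating such a bound from the leftmost non-empty bin of $X_0^{(L)}$ produces an estimate that blows up geometrically in the number of occupied bins, hence in $L$, so the passage to the limit $L\to-\infty$ cannot be closed this way. (This is also why the statement genuinely needs boundedness: Item~2 of Theorem~\ref{thm:finitevinfinite} shows that for locally finite but unbounded $X_0$ freezing can fail, so any argument that only uses ``each bin is finite'' must break somewhere.)

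The missing idea is a \emph{contraction in expectation}. The paper conditions on the sequence of times at which a ball is added strictly to the right of bin $k-1$ and, discarding the first $n_0$ such arrivals (contributing at most $n_0$), bounds the conditional probability that the $n$th such arrival lands in bin $k$ by $\mu(\llbracket M_0+n+1,M_0+n+X_\infty(k-1)\rrbracket)/\mu(\llbracket 1,n_0\rrbracket)$; summing over $n$ telescopes to $\bar{\mu}(M_0+n_0+1)\,X_\infty(k-1)/\mu(\llbracket 1,n_0\rrbracket)$. Choosing $n_0$ with $\alpha\defeq\bar{\mu}(n_0)/\mu(\llbracket 1,n_0\rrbracket)<1$ yields
\[
\E\bigl(X_\infty(k)\bigr)\;\leq\;A+n_0+\alpha\,\E\bigl(X_\infty(k-1)\bigr),
\]
where $A=\sup_j X_0(j)$. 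This recursion, started from the leftmost bin of the truncation where $\E(X_\infty^{(L)}(\ell))\leq A$, gives the uniform bound $\E(X_\infty^{(L)}(k))\leq (A+n_0)/(1-\alpha)$ independent of $L$, and Fatou plus monotone convergence finish the proof. Your first two paragraphs are a correct (if informally handled, e.g.\ the event $\{X_m(k-1)\leq B\ \forall m\}$ is not $\mathcal{F}_n$-measurable and should be replaced by a stopping time) proof of the special case of configurations with a leftmost ball, but without a quantitative estimate of this contractive form the general bounded case remains open in your write-up.
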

One may wish to replace the ``bounded'' assumption in the above theorem by  ``locally finite''. Unfortunately, this is not possible as freezing is not always guaranteed in that case. In fact, even the definition of freezing itself may be ambiguous because the event $\{ X_\infty(k) < \infty\}$ need not be an a.s. deterministic event. The apparently simple question of whether a $0-1$ law holds for the freezing of a bin turns out to be surprisingly tricky. We give a partial answer, that requires a monotonicity assumption on $\mu$.
\begin{theorem}
\label{thm:01law}
Assume that the probability distribution $\mu$ satisfies
\begin{equation}
  \label{eqn:monotonous}
  \hbox{$(\mu(n), n \geq 1)$ is non-increasing.}
\end{equation}
Then, for any initial configuration $X_0 \in \S$, we have
\[
  \P(X_\infty(k) < \infty) \in \{0,1\} \quad \hbox{for all $k\in \Z$.}
\]
\end{theorem}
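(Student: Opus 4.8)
The plan is to exploit the monotonicity assumption~\eqref{eqn:monotonous} to set up a coupling/zero–one argument. First I would reduce the statement to a single bin: since $X_\infty(k) < \infty$ implies $X_\infty(j) < \infty$ for all $j \le k$ (a bin to the left can only receive balls after the one to its right has, in a suitable monotone sense — or at least because freezing of $k$ forces the front of the restricted dynamics to the left of $k$ to stabilize), it suffices to prove the dichotomy for one fixed $k$, say $k$ the rightmost bin with $X_0(k) \ne 0$. Next I would look for a source of independence. The natural candidate is to decompose the trajectory of the IBM into i.i.d.\ blocks. The key classical tool here is the theory of \emph{renovation events} of Foss–Konstantopoulos (as used for Proposition~\ref{prop:strongFreezing}): under mild conditions the evolution of the front, seen from the front, regenerates. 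I would try to argue that the event $\{X_\infty(k) < \infty\}$ is measurable with respect to a tail-type $\sigma$-algebra of an i.i.d.\ sequence — for instance, the sequence $(\xi_n)_{n\ge 1}$ itself, or the sequence of ``excursions'' of the relevant coordinate — so that a Kolmogorov- or Hewitt–Savage-type $0$–$1$ law applies.

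The more robust approach, and the one I would actually pursue, is a self-similarity/restart argument. Observe that $\{X_\infty(k) = \infty\}$ is the event that bin $k$ receives infinitely many balls. Condition on the configuration $X_m$ at a large time $m$ and on the first $m$ variables $\xi_1,\dots,\xi_m$; the future evolution is again an IBM with the same law $\mu$ but started from $X_m$. If from \emph{some} reachable configuration freezing of bin $k$ has probability $p \in (0,1)$, I want to derive a contradiction by showing that, because of monotonicity of $\mu$, the ``worst'' reachable configurations are comparable and the freezing probability cannot strictly decrease; iterating the restart then forces $p \in \{0,1\}$. Concretely, I would establish a \emph{stochastic monotonicity} lemma: if $X_0 \preceq X_0'$ (coordinatewise, or in the sense of having fewer balls to the right of every site), then there is a coupling under which $X_n \preceq X_n'$ for all $n$, hence $\P(X_\infty(k)<\infty \mid X_0') \le \P(X_\infty(k)<\infty \mid X_0)$; this is exactly where the non-increasing assumption on $\mu$ is used, because adding a ball somewhere can only push selection probabilities toward balls that are ``further right,'' and $\mu$ non-increasing is what makes the induced coupling monotone. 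Combined with the fact that the dynamics can, with positive probability, move the configuration into a ``large'' state dominating any fixed target, this would let me run a Lévy-type argument: $\P(X_\infty(k) < \infty \mid X_n, \xi_{1:n}) \to \ind{X_\infty(k)<\infty}$ a.s., and monotonicity pins the left-hand side down to a constant in $\{0,1\}$.

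The main obstacle I anticipate is precisely the stochastic monotonicity step: the IBM is \emph{not} obviously monotone as a function of the initial configuration, because inserting extra balls changes the ranks of all balls to their left and can therefore redirect where newborns land — a newborn that would have frozen bin $k$ in one configuration might be diverted in the other. The assumption~\eqref{eqn:monotonous} must be used in a careful coupling of the selection rule: one should couple the two IBMs so that at each step the selected ball in the larger configuration is ``at least as far right'' as in the smaller one, which requires writing $\xi_n$ as a deterministic function of a uniform variable and exploiting that $n \mapsto \sum_{j\ge n}\mu(j)$ is concave-like under~\eqref{eqn:monotonous}. Making this coupling precise, and checking it interacts correctly with the conditioning used in the Lévy $0$–$1$ argument (so that the limiting random variable is genuinely $\ind{X_\infty(k)<\infty}$ and not merely some coarser event), is where the real work lies. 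I would also need a small reachability lemma guaranteeing that from any $X_0 \in \S$ the process visits, with positive probability, configurations dominating an arbitrary prescribed bounded profile, which is elementary but must be stated.
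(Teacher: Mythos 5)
Your proposal does not close the argument, and the paper's proof goes by a different and more concrete route. The paper fixes the bin $k$ and considers the events $B_n = \{X_n(k) > X_{n-1}(k)\}$ that the $n$th newborn lands in bin $k$; it proves these events are \emph{negatively correlated}, so that either $\sum_n \P(B_n) < \infty$ (Borel--Cantelli gives freezing a.s.) or $\sum_n \P(B_n) = \infty$ (the Kochen--Stone generalized Borel--Cantelli lemma for negatively correlated events gives non-freezing a.s.). The negative correlation is reduced, via the Markov property, to comparing $\P_X(B_{m+1}\mid B_1)$ with $\P_X(B_{m+1})$, and this comparison is done by conditioning on whether the first ball falls to the left of $k$, in $k$, or to the right of $k$, and coupling the three resulting configurations --- which differ by the placement of a \emph{single} ball. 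This is exactly the tractable special case in which the monotone coupling (Lemma~\ref{lemma:couplage2}) can be built; the hypothesis there is not coordinatewise domination but the two-sided condition \eqref{hypH2} ($\tilde X(j) \le X(j)$ on a window \emph{and} $\sum_{i \ge j} X(i) \le \sum_{i\ge j}\tilde X(i)$), which is what a single displaced ball produces.

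The gaps in your version are the following. First, your opening reduction is backwards: by Corollary~\ref{cor:cascadefinite}, freezing propagates to the \emph{right} ($X_\infty(k-1)<\infty$ implies $X_\infty(k)<\infty$), not to the left; for a distribution of finite type $d\ge 2$ started from the barrier configuration, bin $d$ freezes while bin $1$ does not, so your claimed implication fails. Second, and more seriously, the Lévy $0$--$1$ restart argument is not closed: Lévy's theorem gives $\P(X_\infty(k)<\infty\mid\mathcal F_n)\to \ind{X_\infty(k)<\infty}$ a.s., but to conclude the limit is deterministic you need either a uniform positive lower (or upper) bound on these conditional probabilities along the trajectory, or an exchangeability/tail-triviality structure; ``monotonicity pins the left-hand side down to a constant'' is not an argument, because the configurations $X_n$ reached along different realizations are not comparable in any order for which you have a coupling, and your reachability lemma only gives domination with \emph{positive} probability, which is not enough to force the a.s.\ limit of the martingale into $\{0,1\}$. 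Third, the stochastic monotonicity you posit (coordinatewise $X_0\preceq X_0'$ implies a monotone coupling) is stronger than what the paper establishes and is exactly the step you yourself flag as problematic; the paper avoids it by only ever coupling configurations differing by one ball, where the rank bookkeeping and the non-increasing assumption on $\mu$ can be checked case by case. As it stands, your proposal identifies the right role for assumption \eqref{eqn:monotonous} but does not contain a complete mechanism for the dichotomy.
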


\begin{remark}
Assumption \eqref{eqn:monotonous} could be replaced by $(\mu(n), n\geq 1)$ ultimately non-increasing up to simple modifications in the proof. However, we do not know whether the assumption can be completely dropped.
\end{remark}

\medskip

One of the main ways in which bins can have a growing number of balls as $n \to \infty$ stems from the ``cascade effect'': when a bin becomes large, it has a greater chance to be hit by the $(\xi_n)$ and therefore it increases the rate of growth of the bin to it right (while decreasing its own growth rate). This reinforcement scheme can lead to non-freezing of bins for particular initial configurations. As we will see below, it can even be the case that some bins grow to infinity while others freeze a.s. To study the different freezing scenarii in detail, it is useful to consider first the extremal configuration $\widehat{X}_0$ consisting of a single infinite bin (i.e. a barrier) at $0$ while all the other bins are empty:
\begin{equation}\label{def:widehatX}
  \widehat{X}_0(k) \defeq
  \begin{cases}
    \infty & \text{ if } k = 0,\\
    0 & \text{ otherwise}.
  \end{cases}
\end{equation}
We will denote by $(\widehat{X}_n, n \geq 0)$ the IBM process starting from $\hat{X}_0$.

\begin{definition}
\label{def:classification}
For $d \in \N \cup \{\infty\}$, we say that the probability distribution $\mu$ is of \emph{type $d$} if
\[
  d =\inf\{k\geq 1 \,:\, \widehat{X}_\infty(k) < +\infty \} \quad \text{a.s.}
\]
We say that $\mu$ is of \emph{finite type} (resp. of \emph{infinite type}) if $d <+\infty$ (resp. $d =+\infty$).
\end{definition}

In view of Proposition~\ref{prop:strongFreezing}, it is easy to check that $\mu$ is of type $1$ if and only if it has a first moment.\footnote{Proposition~\ref{prop:strongFreezing} shows that when $\mu$ has a first moment, it is of type $1$. To prove the converse result, we simply notice that, when $\mu$ does not have a first moment, the Borel-Cantelli lemma for independent events ensures that $\xi_n \geq n$ i.o. and therefore $\widehat{X}_\infty(1) = \infty$ hence $\mu$ is not of type $1$. }

Let us point out that the type of a distribution may not necessarily be well-defined without a $0-1$ law on the freezing of bins. Yet, it will be proved in Corollary~\ref{cor:cascadefinite} that the inclusion $\{\hat X_\infty(k) = \infty\} \subset \{ \hat X_\infty(k-1) = \infty \}$ holds a.s for any distribution $\mu$ and any $k\geq1$. Combining this result with Theorem~\ref{thm:01law}, we conclude that
\begin{corollary}
If  $(\mu(n), n \geq 1)$ is non-increasing, then $\mu$ has a well-defined deterministic type $d \in \N \cup \{\infty\}$ and the set of non-freezing bins is the interval:
\[
  \left\{ k\in\Z,\; \hat{X}_\infty(k) = \infty \right\} = \llbracket 0, d - 1 \rrbracket \quad \text{a.s.}
\]
\end{corollary}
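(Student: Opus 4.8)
The plan is to apply Theorem~\ref{thm:01law} to the particular initial configuration $\widehat X_0$, combine it with the cascade inclusion of Corollary~\ref{cor:cascadefinite}, and first dispatch the trivial behaviour of the bins lying to the left of the barrier.

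First I would record that, in the process $(\widehat X_n)$, no ball is ever added to a bin of index $\leq 0$: at time $1$ all balls sit in bin $0$, so the newborn ball goes to bin $1$; and by induction, if all balls occupy bins of index $\geq 0$ at time $n-1$, then the bin $J_{n-1}$ containing the $\xi_n$th rightmost ball has index $\geq 0$, so the newborn ball goes to a bin of index $\geq 1$. Consequently, deterministically, $\widehat X_\infty(k) = 0$ for every $k \leq -1$ (these bins start empty and never change), while $\widehat X_\infty(0) = \infty$ (bin $0$ is the barrier).

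Next, since $\widehat X_0 \in \S$, Theorem~\ref{thm:01law} applies and gives, for every $k \in \Z$, a probability $p_k \defeq \P(\widehat X_\infty(k) = \infty) \in \{0,1\}$. By Corollary~\ref{cor:cascadefinite}, $\{\widehat X_\infty(k) = \infty\} \subset \{\widehat X_\infty(k-1) = \infty\}$ a.s.\ for every $k \geq 1$, hence $p_k \leq p_{k-1}$; thus $(p_k)_{k \geq 0}$ is a non-increasing $\{0,1\}$-valued sequence with $p_0 = 1$. Therefore there is a deterministic $d \in \N \cup \{\infty\}$, $d \geq 1$, such that $p_k = 1$ for $0 \leq k < d$ and $p_k = 0$ for $k \geq d$, namely $d = \inf\{k \geq 1 : p_k = 0\}$ with the convention $\inf \emptyset = \infty$. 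Inspecting the edge cases $d = 1$ (where $\widehat X_\infty(1) < \infty$ a.s.) and $d = \infty$ (where $\{k \geq 1 : \widehat X_\infty(k) < \infty\} = \emptyset$ a.s.) together with the generic case, one checks that this $d$ equals $\inf\{k \geq 1 : \widehat X_\infty(k) < \infty\}$ on an almost sure event, so $\mu$ has the well-defined deterministic type $d$ of Definition~\ref{def:classification}.

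Finally, I would intersect the countably many almost sure events used above — one for each instance of the $0$–$1$ law applied to $\widehat X_0$, and one for each cascade inclusion — to obtain a single almost sure event on which, \emph{simultaneously} for all $k \in \Z$, we have $\widehat X_\infty(k) = \infty$ exactly when $k \in \llbracket 0, d-1 \rrbracket$: indeed $\widehat X_\infty(k) < \infty$ for $k \leq -1$ by the barrier argument, $\widehat X_\infty(k) = \infty$ for $0 \leq k \leq d-1$ since $p_k = 1$, and $\widehat X_\infty(k) < \infty$ for $k \geq d$ since $p_k = 0$. This is the announced identity (with the usual reading $\llbracket 0, \infty \rrbracket = \Z_+$ when $d = \infty$). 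No step here is genuinely difficult; the only points that need a little care are the treatment of the bins strictly left of the barrier and the bookkeeping that turns countably many ``a.s.'' statements into one, along with the degenerate values $d \in \{1, \infty\}$.
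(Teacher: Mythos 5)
Your proposal is correct and follows exactly the route the paper intends: the paper derives this corollary in one line by combining the a.s.\ inclusion $\{\hat X_\infty(k) = \infty\} \subset \{\hat X_\infty(k-1) = \infty\}$ of Corollary~\ref{cor:cascadefinite} with the $0$--$1$ law of Theorem~\ref{thm:01law} applied to $\widehat X_0$, and your write-up simply makes explicit the same monotone $\{0,1\}$-valued sequence $(p_k)$, the trivial behaviour left of the barrier, and the countable intersection of a.s.\ events.
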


Finding the exact type of a general distribution $\mu$ seems a delicate question. However, with the additional assumption that the distribution has regular variation, one can provide an explicit integral criterion to characterize the type of $\mu$. Recall that a function $\gamma : \Z_+ \to (0,\infty)$ is said to have regular variation with index $\alpha$ (at infinity) if
\[
  \lim_{n\to\infty}\frac{\gamma( \floor{u n})}{\gamma( n)} = u^\alpha\quad\hbox{for all $u>0$.}
\]
When $\alpha = 0$, the function is said to be slowly varying. The next result shows, in particular, that there exist distributions for any type $d \in \N \cup \{+\infty\}$.

\begin{theorem}
\label{thm:typeK}
Assume that $(\mu(n), n \geq 1)$ is  regularly varying at infinity with index $-\alpha$ for some $\alpha \geq 1$. Let $\bar{\mu}(j) = \mu(\llbracket j, +\infty\llbracket)$ denote its tail. Then $\mu$ has a well-defined type $d$ given by
\[
d = \inf\Big\{ h \in \N: \; \sum_{j} \bar{\mu}(j)^h < \infty\Big\}.
\]
In particular, $\mu$ is of finite type if $\alpha > 1$ and of infinite type if $\alpha=1$.
\end{theorem}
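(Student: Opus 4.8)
The plan is to run the dynamics from the barrier configuration $\widehat X_0$ and to \emph{linearize} it. Set $N_n(k)\defeq\widehat X_n(k)$ and let $Y^{(k)}_n\defeq\sum_{j\ge k}N_n(j)$ be the number of balls lying in bins of index $\ge k$ at time $n$, with the convention $Y^{(0)}_n\equiv\infty$. Since every step adds exactly one ball and that ball always lands in a bin of index $\ge 1$, we have $Y^{(1)}_n=n$; moreover the $\xi_n$-th rightmost ball lies in a bin of index $\ge k-1$ iff $\xi_n\le Y^{(k-1)}_{n-1}$, so bin $k$ receives a ball at step $n$ exactly when $Y^{(k)}_{n-1}<\xi_n\le Y^{(k-1)}_{n-1}$. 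Writing $\calF_{n-1}=\sigma(\xi_1,\dots,\xi_{n-1})$, the conditional probability of this event is
\[
  p_n^{(k)}\defeq\bar\mu\bigl(Y^{(k)}_{n-1}+1\bigr)-\bar\mu\bigl(Y^{(k-1)}_{n-1}+1\bigr),
\]
and $N_\infty(k)=\sum_{n\ge1}\ind{Y^{(k)}_{n-1}<\xi_n\le Y^{(k-1)}_{n-1}}$. By L\'evy's conditional Borel--Cantelli lemma, $\{N_\infty(k)<\infty\}=\{\sum_n p_n^{(k)}<\infty\}$ a.s., so the theorem reduces to deciding the convergence of $\sum_n p_n^{(k)}$ for each $k$.

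The second ingredient is a law of large numbers: $Y^{(k)}_n/n\to1$ a.s.\ for every fixed $k$, equivalently $N_n(j)/n\to0$ a.s.\ for every $j$. This is proved by induction on $j$: $N_n(j)-\sum_{m\le n}p_m^{(j)}$ is a martingale with increments bounded by $1$, hence $n^{-1}\bigl(N_n(j)-\sum_{m\le n}p_m^{(j)}\bigr)\to0$, while $n^{-1}\sum_{m\le n}p_m^{(j)}\le n^{-1}\sum_{m\le n}\bar\mu\bigl(Y^{(j)}_{m-1}+1\bigr)\to0$ by Ces\`aro once the induction hypothesis gives $Y^{(j)}_{m-1}\to\infty$; the case $j=1$ is immediate since $N_n(1)=\sum_{m\le n}\ind{\xi_m\ge m}$ is a sum of independent Bernoulli variables of mean $\sum_{m\le n}\bar\mu(m)=o(n)$. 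Combining $Y^{(k)}_n\sim n$ with the uniform convergence theorem for regularly varying sequences (Potter bounds) yields, for any fixed $\eta>0$ and all $n$ beyond an a.s.-finite time, $\mu(j)\asymp\mu(n)$ uniformly over $j\in\llbracket Y^{(k)}_{n-1}+1,Y^{(k-1)}_{n-1}\rrbracket$, hence
\[
  c\,N_{n-1}(k-1)\,\mu(n)\ \le\ p_n^{(k)}\ \le\ C\,N_{n-1}(k-1)\,\mu(n);
\]
in addition Karamata's theorem gives $\sum_{m\ge j}\mu(m)\asymp\bar\mu(j)$ and tells us that $\sum_{j\le n}\bar\mu(j)^{h}$ is regularly varying in $n$, of positive index when $h(\alpha-1)<1$ and slowly varying when $h(\alpha-1)=1$, diverging precisely for $h<d$ and converging for $h\ge d$.

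For the upper bound I would prove by induction on $k$ that $\E[N_n(k)]\le C_k\bigl(1+\sum_{j\le n}\bar\mu(j)^k\bigr)$. In the inductive step one splits $\E[p_n^{(k)}]$ over the event $\{Y^{(k)}_{n-1}\ge(1-\eta)n\}$: on it one invokes the upper estimate on $p_n^{(k)}$ and the induction hypothesis for bin $k-1$, and on its complement one uses $p_n^{(k)}\le1$ together with $\P\bigl(Y^{(k)}_{n-1}<(1-\eta)n\bigr)=\P\bigl(\sum_{j<k}N_{n-1}(j)>\eta n-1\bigr)\le e^{-cn}$; this exponential bound is itself obtained by induction on $j$, dominating $N_{n-1}(j)$ on the good event for bin $j-1$ by a sum of independent Bernoulli variables of mean $o(n)$ and applying a Chernoff bound. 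Summing over $n$ and using $\sum_{m\le n}\mu(m)\sum_{j\le m}\bar\mu(j)^{k-1}\le\sum_{j\le n}\bar\mu(j)^{k}$ closes the induction. Consequently, if $\sum_j\bar\mu(j)^d<\infty$ then $\sup_n\E[N_n(k)]<\infty$ for every $k\ge d$, so $N_\infty(k)<\infty$ a.s.\ for all $k\ge d$. (When $\alpha=1$, Karamata gives $\sum_{j\le n}\bar\mu(j)^h\sim n\,\bar\mu(n)^h\to\infty$ for every $h$, so $d=\infty$ and there is nothing to bound here; this is also the ``$\alpha=1\Rightarrow$ infinite type'' half of the last assertion, while ``$\alpha>1\Rightarrow$ finite type'' is the elementary fact that $\bar\mu(j)^h$ is summable once $h(\alpha-1)>1$.)

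For the matching lower bound it remains to show $N_\infty(k)=\infty$ a.s.\ for every $k<d$, i.e.\ $\sum_n p_n^{(k)}=\infty$ a.s. By the lower estimate on $p_n^{(k)}$ this follows once $\sum_n\mu(n)\,N_{n-1}(k-1)=\infty$ a.s., so I would prove by induction on $k$ (for $k\le d-2$, and for all $k$ when $d=\infty$) that $N_n(k)\ge c_k\sum_{j\le n}\bar\mu(j)^k$ eventually, a.s.: write $N_n(k)=\sum_{m\le n}p_m^{(k)}+(\text{martingale})$, bound $\sum_{m\le n}p_m^{(k)}\ge c\sum_{m\le n}\mu(m)N_{m-1}(k-1)\gtrsim\sum_{j\le n}\bar\mu(j)^k$ via the induction hypothesis and Karamata, and control the martingale by Freedman's (Bernstein-type) inequality, whose conditional variance term is $\le p_m^{(k)}$; the resulting fluctuation is $O\bigl((\sum_{m\le n}p_m^{(k)}\cdot\log n)^{1/2}\bigr)$, negligible against $\sum_{j\le n}\bar\mu(j)^k$ precisely because this partial sum is regularly varying of positive index for such $k$ (for $\alpha>1$ one uses that $d\ge3$ forces $k(\alpha-1)<1$, and for $\alpha=1$ the index is $1$). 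Feeding $k=d-2$ into the lower bound for $p_n^{(d-1)}$ and using $\sum_n\mu(n)\sum_{j\le n}\bar\mu(j)^{d-2}=\sum_j\bar\mu(j)^{d-1}=\infty$ gives $\sum_n p_n^{(d-1)}=\infty$; for $d=2$ this degenerates to the deterministic identity $\sum_n p_n^{(1)}=\sum_n\bar\mu(n)=\infty$. Hence $\inf\{k\ge1:N_\infty(k)<\infty\}=d$ a.s., i.e.\ $\mu$ has well-defined type $d$. The conceptual crux is identifying the linear recursion for $(Y^{(k)}_n)$ and the reduction through L\'evy's lemma; after that everything is driven by Karamata's theorem. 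The main obstacles are the concentration estimates --- the exponential tail bound $\P(\sum_{j<k}N_{n-1}(j)>\eta n)\le e^{-cn}$ that makes the upper-bound errors summable, and, more delicately, the Freedman argument in the lower bound, where the boundary case $(d-1)(\alpha-1)=1$ must be handled carefully so that the martingale fluctuations do not swamp the (only slowly diverging) main term.
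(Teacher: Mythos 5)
Your skeleton coincides with the paper's: start from the barrier configuration, observe that the conditional probability of feeding bin $k$ at step $n$ is $\mu(\llbracket Y^{(k)}_{n-1}+1,Y^{(k-1)}_{n-1}\rrbracket)\approx N_{n-1}(k-1)\mu(n)$ once $Y^{(k)}_{n-1}\sim n$, induct on $k$, and close with Karamata plus a conditional Borel--Cantelli argument; the Fubini identity $\sum_n\mu(n)\sum_{j\le n}\bar{\mu}(j)^{k-1}=\sum_j\bar{\mu}(j)^{k}$ and your observation that the boundary case $(k)(\alpha-1)=1$ can only occur at $k=d-1$, where mere divergence (not an asymptotic) is needed, are both correct. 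Where you genuinely diverge is the quantitative engine. The paper isolates one lemma (its Lemma 4.2, after Freedman/L\'evy): if $\P(Z_n=1\mid\calF_{n-1})\sim p_n$ a.s. for a \emph{deterministic} sequence $(p_n)$, then $\sum_{m\le n}Z_m\sim\sum_{m\le n}p_m$ on divergence. Fed into the induction, this propagates the exact deterministic equivalent $\hat{X}_n(k)\sim c_{k,\alpha}\sum_{j\le n}\bar{\mu}(j)^k$ in a single step, with no expectations, no Chernoff bounds and no Freedman inequality, and it delivers the explicit Gamma-ratio constants as a bonus. Your route --- plain L\'evy Borel--Cantelli followed by separate upper bounds in expectation and lower bounds via Freedman --- is viable but heavier, and one step needs repair as stated: the bound $\P\bigl(Y^{(k)}_{n-1}<(1-\eta)n\bigr)\le e^{-cn}$ does not follow from intersecting the ``good events'' over all earlier times, since a union bound over $m\le n$ of exponentially small probabilities only yields a small constant, not $e^{-cn}$. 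The fix is to use monotonicity in time: with $A^{(j)}_m\defeq\sum_{i\le j}N_m(i)$, on the event $\{A^{(j-1)}_{n-1}\le\eta' n\}$ one has the pathwise domination $N_{n-1}(j)\le\#\{m\le n-1:\xi_m>m-1-\eta' n\}$, a sum of \emph{independent} Bernoulli variables of total mean $\eta'n+o(n)$, and Chernoff then closes the induction on $j$. (Alternatively, only summability of the bad-event probabilities is needed, and the expectation/Chernoff layer can be bypassed entirely by proving a.s. eventual upper bounds on $N_n(k-1)$, which is in effect what the paper's lemma packages.)
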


\begin{remark}
\begin{enumerate}
\item The criterion above for type $d=1$ corresponds to the first moment condition on $\mu$, which we know to be a necessary and sufficient condition, regardless of the regular variations of $\mu$ at $\infty$.
\item If $(\mu(n), n\geq 1)$ is regularly varying with index $-\alpha$, then by Karamata's theorem (c.f. for instance \cite[Chapter VIII.9]{Feller71}), its tail is also regularly varying at infinity with index $-\alpha + 1$. Thus, there exists a slowly varying function $L$ such that $\bar{\mu}(n) = n^{-\alpha + 1} L(n)$. The type $d$ of $\mu$ is then given by the formula:
\[
d = \begin{cases}
+\infty & \hbox{if $\alpha = 1$,}\\
\lceil \frac{1}{\alpha - 1} \rceil + 1 & \hbox{if $\frac{1}{\alpha- 1}$ is an integer and $\sum_j \frac{L(j)^{\frac{1}{\alpha- 1}}}{j} = \infty$,}\\
\lceil \frac{1}{\alpha - 1} \rceil & \hbox{otherwise.}
\end{cases}
\]
\item The proof of Theorem~\ref{thm:typeK} gives extra information on the asymptotic rate at which each bin grows. For a distribution $\mu$ as in Theorem~\ref{thm:typeK} with finite type $d$, we find that, for any $k \in \llbracket 1, d \llbracket$,
\[
  \hat{X}_n(k) \underset{n\to\infty}{\sim} C_{k,\alpha} \sum_{j \leq n} \bar{\mu}(j)^k \quad \hbox{a.s.}
\]
where $C_{k,\alpha}$ are explicit positive constants depending only on $k$ and $\alpha$.
\end{enumerate}
\end{remark}

By definition, the type of a distribution $\mu$ is the number of bins (plus $1$) that do not freeze when starting from the barrier configuration $\widehat{X}_0$. If $\mu$ has finite type, this roughly tells us that the number of consecutive bins in a ``cascade'' must be finite. But then, this means that such cascades should not be able to propagate from $-\infty$ so we should expect almost sure freezing of all bins for every locally finite initial configuration. Our last result extends Theorem~\ref{thm:unifbounded} to locally finite configurations and relates the finiteness of the type with the existence of non-freezing configurations.

\begin{theorem}~
\label{thm:finitevinfinite}
\begin{enumerate}
\item If $\mu$ is of finite type and $(\mu(n), n \geq 1)$ is non-increasing, then, for any initial locally finite configuration $X_0 \in \S$, freezing occurs a.s.:
$$X_\infty(k) < \infty \quad\hbox{a.s. for all $k \in \Z$.}$$
\item If $\mu$ is of infinite type, then there exists a locally finite configuration $X_0 \in \S$ such that no bin freezes:
$$X_\infty(k) = \infty \quad\hbox{a.s. for all $k \in \Z$.}$$
\end{enumerate}
\end{theorem}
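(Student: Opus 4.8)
The plan is to treat the two statements separately, after isolating one structural consequence of~\eqref{eqn:monotonous} that organises the first part. Throughout, write $R_n(j):=\sum_{i>j}X_n(i)$, so that a ball is added to bin $k$ at step $n$ precisely when $\xi_n$ lands in the ``rank window'' $(R_{n-1}(k-1),R_{n-1}(k-2)]$ occupied by bin $k-1$, an integer interval of length $X_{n-1}(k-1)$.

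\emph{Propagation of freezing to the right.} First I would show: if $(\mu(n))$ is non-increasing, $X_0$ is locally finite and $X_\infty(k-1)<\infty$, then $X_\infty(k)<\infty$ a.s. Two soft ingredients suffice. First, $n\mapsto R_n(k-1)$ increments (by one) at step $n$ with conditional probability $1-\bar\mu(R_{n-1}(k-2)+1)$; since $R_{n-1}(k-2)\ge R_{n-1}(k-1)\to\infty$ a.s., these probabilities tend to $1$, whence $R_n(k-1)/n\to 1$ a.s. Second, on $\{X_\infty(k-1)=:b<\infty\}$ the conditional probability that bin $k$ receives a ball at step $n$ equals $\mu(\{R_{n-1}(k-1)+1,\dots,R_{n-1}(k-2)\})\le b\,\mu(R_{n-1}(k-1)+1)$ by monotonicity of $\mu$; since $R_{n-1}(k-1)\ge \lceil n/2\rceil$ eventually, the sum $\sum_n\mu(R_{n-1}(k-1)+1)$ is eventually dominated by $\sum_n\mu(\lceil n/2\rceil)=2\sum_m\mu(m)<\infty$, so it is a.s.\ finite. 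The conditional Borel--Cantelli lemma then forces bin $k$ to receive finitely many balls. Consequently, for a locally finite $X_0$ the random set $\{j\in\Z:\,X_\infty(j)=\infty\}$ is a.s.\ downward closed, i.e.\ of the form $\{j\le M\}$ (possibly empty, possibly all of $\Z$); to prove~(1) it remains only to exclude $M>-\infty$.

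\emph{Excluding an infinite cascade, via finite type.} If $X_0$ has finitely many balls it is bounded and~(1) is Theorem~\ref{thm:unifbounded}; so assume $X_0$ has infinitely many balls. Let $X_0^{(m)}$ be $X_0$ with all bins $<-m$ emptied; this is finitely supported, hence bounded, so $X_\infty^{(m)}(k)<\infty$ a.s.\ by Theorem~\ref{thm:unifbounded}. Running all these processes with the same $(\xi_n)$, an elementary coupling argument shows that, for fixed $n$, $X_n^{(m)}(k)=X_n(k)$ once $m$ is large enough (the two processes make identical selections until bin $-m-1$ is first hit, which occurs later and later as $m\to\infty$); hence $X_\infty(k)\le\sup_m X_\infty^{(m)}(k)$, and it suffices to bound the right-hand side. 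This is where $\mu$ having finite type $d$ enters: a cascade fed from the finitely many balls of $X_0^{(m)}$ cannot propagate more than $d$ bins. Quantitatively, the analysis behind Theorem~\ref{thm:typeK} shows that the growth of a bin is controlled by that of its left neighbour times a factor comparable to $\bar\mu$ evaluated at the front rank $\asymp n$, so iterating $d$ times produces the series $\sum_m\bar\mu(m)^d$, which converges exactly because $\mu$ has type $d$. The crux — and the main obstacle — is to make this bound \emph{uniform in $m$}: one cannot split $X_n^{(m)}(k)$ into independent contributions of the individual initial balls, because a ball far to the left has a very high rank (``crowding'') and is selected rarely, an effect that couples all the balls. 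I expect this to be the technical heart of the argument, carried out by re-running the cascade estimates of Theorem~\ref{thm:typeK} with an arbitrary locally finite configuration in place of the single barrier.

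\emph{Part (2).} For infinite type I would construct $X_0$ explicitly: put $X_0(j)=0$ for $j\ge1$ and $X_0(-l)=c_l$ for $l\ge0$, with $(c_l)$ increasing very fast (say super-exponentially) and defined recursively. Infinite type means $\sum_j\bar\mu(j)^h=\infty$ for every $h$ — equivalently, every bin of the barrier process fails to freeze — which is what lets a cascade run through arbitrarily many consecutive bins. One chooses $c_l$ so large that, on the time scale $n\lesssim c_0+\cdots+c_{l-1}$ relevant for the bins just to its right, the pile at $-l$ behaves like a barrier: its rank window then carries $\mu$-mass at least a fixed fraction of $\bar\mu$ at the current front rank, and summing over such $n$ diverges by the infinite-type hypothesis, so a conditional second Borel--Cantelli argument makes bin $-l+1$ be hit infinitely often. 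Propagating this gives first that all bins $\le0$ are non-freezing, and then, bin $0$ now being a genuinely growing source, that all bins $\ge1$ are non-freezing as well. The delicate point is to keep the induction quantitative enough that a single $X_0$ works simultaneously for every $k\in\Z$, which one arranges by carrying along an explicit lower bound on the growth rate of each bin that deteriorates in a controlled way from one bin to the next.
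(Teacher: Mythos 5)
Your Part~1 has a genuine gap, and you have in fact located it yourself: the ``uniform in $m$'' bound over the truncations $X_0^{(m)}$ is not something you prove, and the route you propose for it will not work. You want to ``re-run the cascade estimates of Theorem~\ref{thm:typeK}'' and to use that finite type $d$ is equivalent to $\sum_j \bar{\mu}(j)^d<\infty$, but both of these are available only under the regular-variation hypothesis of Section~4; the hypothesis of Item~1 is merely that $\widehat{X}_\infty(d)<\infty$ a.s., which carries no quantitative rate whatsoever, so there is no series $\sum_m\bar\mu(m)^d$ to sum and no ``factor comparable to $\bar\mu$'' controlling the growth of one bin in terms of its neighbour. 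The paper sidesteps the uniformity problem entirely by a soft argument that you do not use: it suffices to prove $\P(X_\infty(k)<\infty)>0$, because the $0$--$1$ law of Theorem~\ref{thm:01law} (valid under the same monotonicity assumption) then upgrades this to probability one. Positive probability is obtained by comparing $X$ with the configuration $X_0^{(\delta)}$ carrying an infinite barrier at $\delta$ with $\delta+d<k$: to the right of a barrier the process is, after a reachable finite perturbation, the $\widehat X$ process, so finite type gives $\P(X^{(\delta)}_\infty(k)=X_0^{(\delta)}(k))>0$ (Lemma~\ref{lem:pos0}); a monotone coupling then shows that on this event no ball is added to bin $k$ in $X$ either. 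Without invoking the $0$--$1$ law your Part~1 is a plan with its central step missing. (Your preliminary ``propagation of freezing to the right'' is fine and is essentially Corollary~\ref{cor:cascadefinite}, which the paper obtains instead from the finite-offspring bound of Lemma~\ref{lem:finiteOffspring}; note also that your appeal to $\sum_m\mu(m)<\infty$ is just $\mu$ being a probability measure, so no extra hypothesis is hidden there.)

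Your Part~2 is in the same spirit as the paper's construction: recursively chosen, very large finite bins that imitate barriers, combined with the monotone approximation of an infinite bin by a bin of height $h\to\infty$ and a Borel--Cantelli control of the failure probabilities. One caveat: ``infinite type means $\sum_j\bar\mu(j)^h=\infty$ for every $h$'' is again only true under regular variation; you should work directly with the definition ($\widehat X_\infty(k)=\infty$ for all $k$, for the barrier process), as you do in your parenthetical, and as the paper does by choosing each new bin height so large that the probability that any of the bins $\llbracket -k,k\rrbracket$ stalls below level $k$ is at most $2^{-k}$. As written, your Part~2 is a correct outline but would need that quantitative recursion spelled out.
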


The rest of the article is organized as follows. In the next section, we study the genealogical structure of the IBM, showing in particular that each ball has a finite progeny a.s. From this fact, we deduce Theorem~\ref{thm:unifbounded}, and carry on proving Item $2.$ of Theorem~\ref{thm:finitevinfinite}.

In Section~\ref{sec:coupling}, we describe a particular coupling of the IBM starting from different initial configurations under Assumption \eqref{eqn:monotonous}. This coupling implies the negative correlation for the events of adding a ball in the same bin at two different times and it enables us to prove  Theorem~\ref{thm:01law} as well as Item $1.$ of Theorem~\ref{thm:finitevinfinite}.

Finally, the last section is devoted to the study of the type of $\mu$ under the regularly varying assumption. We provide here the proof of Theorem~\ref{thm:typeK}, by a simple concentration result for sums of independent Bernoulli variables.

\section{General properties of infinite-bin models}
\label{sec:preliminaries}

Before studying properties of the infinite-bin model dynamics, we introduce a few extra notations that will allow us to describe in greater details the local dynamic of the model.

Given an infinite-bin model configuration $X \in \S$, we define its \emph{rightmost barrier} as
\begin{equation}\label{def:deltabarrier}
\delta(X) \defeq \sup\{j\in \Z : X(j) = +\infty\} \;\in \Z\cup\{-\infty\}
\end{equation}
with the convention $\delta(X) = \sup \emptyset = -\infty$ when $X$ is locally finite. The \emph{front} of $X$ is the location of its rightmost non-empty bin (which is well-defined by definition of a configuration), i.e.
$$F(X) \defeq \max\{ j \in \Z : X(j) \neq 0\} \in \Z.$$
For all $k \in \N$, we also define $B(X,k)$ as the location of the $k$th rightmost ball in $X$, \emph{i.e.}
\[
  B(X,k) \defeq \sup\{ j \in \Z : \sum_{i=j}^\infty X(i) \geq k \},
\]
again with $\sup \emptyset =  -\infty$. We observe that $F(X) = B(X,1)$ and $\delta(X) = \lim_{k\to\infty} B(X,k)$.

\begin{remark}
For all $X \in \S$ and $k \in \N$, the quantity $B(X, k)$ is well-defined irrespectively of the relative ordering of the balls inside each bin. In other words, the dynamic of the infinite-bin model does not require to fix an ordering of the balls. However, it will be convenient to specify an ordering for the balls in each bin, in order to introduce a genealogical structure to the particle system.
\end{remark}

To give a formal definition of the infinite-bin model dynamics, we introduce the operator $\Phi_\xi : \S \to \S$ that maps a configuration to the new configuration obtained by adding a single new ball immediately to the bin on the right of the $k$th rightmost ball:
\begin{equation}\label{eq:operatorPhi}
  \Phi_\xi(X) \defeq \begin{cases}
     (X(j) + \ind{j = B(X,\xi)+1}, j \in \Z) & \text{ if } B(X,\xi) > -\infty,\\
     X & \text{ otherwise.}
  \end{cases}
\end{equation}
Given a probability distribution $\mu$ and an initial configuration $X_0 \in \S$, the infinite-bin model $(X_n,\, n\geq 0)$ with distribution $\mu$ starting from $X_0 \in \S$ can be constructed through the following recursion equation, for $n\geq 0$
\begin{equation}
  \label{eqn:ibm}
  X_{n+1} \defeq \Phi_{\xi_{n+1}} (X_n),
\end{equation}
where $(\xi_n, n \geq 1)$ is an i.i.d. sequence of random variables with law $\mu$.

\begin{figure}
   \begin{center}
   \subfloat[{Configuration $X$}]{\includegraphics[height=2.5cm]{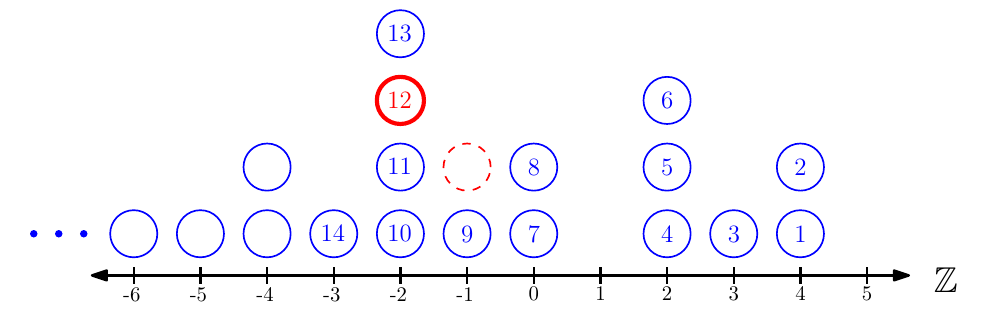}}
   \quad
   \subfloat[{Configuration $\Phi_{\xi}(X)$ with $\xi = 12$.}]{\includegraphics[height=2.5cm]{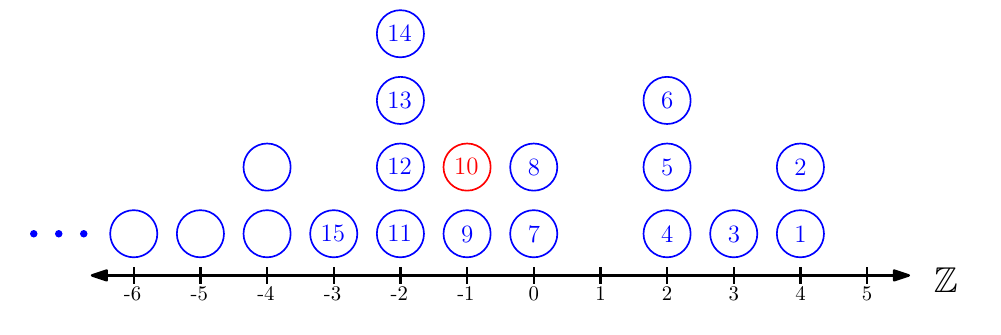}}
   \end{center}
 \caption{\label{fig:IBM}Illustration of the dynamics of the IBM: here, we have a configuration $X$ of balls with $F(X) = 4$. The result of sampling $\xi = 12$ creates a new ball at location $-1$, height $2$, and rank $10$. All the balls with previous rank $\geq 10$ in $X$ have their rank increased by one in $\Phi_{\xi}(X)$.}
\end{figure}

In the rest of this article, we always picture balls as stacked vertically in their bin, and ranked from bottom to top. When a new ball is added, it is placed on the top of the previous balls in the bin. With this ordering, for each ball in $X$, we can speak about
\begin{itemize}
\item its \emph{rank} which corresponds to its index when all balls are ordered starting from the front, from right to left, and then from bottom to top inside each bin.
\item its \emph{location} which is the index/position of the bin it belongs to.
\item its \emph{height} which corresponds to its index inside the bin with the balls ordered from bottom to top.
\end{itemize}
Let us note that, by definition, for each $\xi \in \N$ the location and heights of the balls already present in $X$ are unchanged in the new configuration $\Phi_\xi(X)$. However, the ranks of the balls in $\Phi_\xi(X)$ are either unchanged or increased by one. See Figure~\ref{fig:IBM} for an illustration of the procedure.

\subsection{Genealogical structure}

The ranking introduced above on all the balls in the infinite-bin model allows us to define a genealogical structure for the particle system. Given the infinite-bin model defined by \eqref{eqn:ibm}, for all $n \in \N$, we state that the ball added at time $n$ is a child of the ball ranked $\xi_{n}$th in the configuration $X_{n-1}$.

\begin{remark}
\label{rem:genealogical}
Adding this genealogical structure to the infinite-bin model turns this process into a rank-based branching particle system. In particular, given $(N_t, t \geq 0)$ an independent Poisson process with parameter $1$, the evolution of the continuous-time particle system $(X_{N_t}, t \geq 0)$ can be described as follows. At all time $t$, every particle in the system is ranked from right to left. For all $k \in \N$, the $k$th rightmost ball gives birth to a child at distance $1$ from its current position, independently of any other particle, at rate $\mu(k)$. This family of branching processes has been the subject of a large literature, in particular in its interpretation as a branching process with selection \cite{BrunetDerrida,BerardGouere,Maillard,Tourniaire}.
\end{remark}

With the genealogical structure defined above, we observe that the total number of children made by a given ball in the infinite-bin model is on average finite, without requiring any assumption on $\mu$.
\begin{lemma}
\label{lem:finiteOffspring}Fix a configuration $X_0 \in \S$ and $k\in \N$ such that $k \leq \sum_{j \in \Z} X_0(j)$ (i.e. the $k$th ranked ball exists). Then, writing $R_k$ the total number of children at all times of the $k$th rightmost ball in $X_0$, we have
\[
  \E(R_k) \leq \sum_{\ell=k}^\infty  \frac{\mu(\ell)}{\mu(\llbracket 1,\ell\rrbracket)} < \infty
\]
with the convention $0/0 = 0$ when $\mu(\llbracket1,\ell\rrbracket) = 0$.
\end{lemma}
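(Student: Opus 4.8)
The plan is to follow the trajectory of a single tagged ball. Fix $X_0$ and let $b$ be the ball ranked $k$th from the front in $X_0$; for $n\ge 0$ write $\rho_n$ for the rank of $b$ in $X_n$, so that $\rho_0=k$. The first task is to describe how $(\rho_n)$ evolves, and the key elementary fact is that \emph{a newly inserted ball always receives a rank at most $\xi_n$}: by \eqref{eq:operatorPhi} the new ball lands in the bin immediately to the right of $B(X_{n-1},\xi_n)$, hence strictly to the right of the bin containing the $\xi_n$th ranked ball, and at most $\xi_n-1$ balls lie strictly to the right of that bin before the insertion, so the new ball gets rank $\le\xi_n$. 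Three consequences follow. (i) The rank of any fixed ball is non-decreasing in $n$ and increases by at most $1$ at each step; in particular $(\rho_n)$ is non-decreasing and $\rho_n\le k+n<\infty$. (ii) If $\xi_n\le\rho_{n-1}$, the new ball has rank $\le\xi_n\le\rho_{n-1}$, so $b$ is pushed up by one and $\rho_n=\rho_{n-1}+1$. (iii) By the definition of the genealogy, $b$ produces a child at time $n$ if and only if $\xi_n=\rho_{n-1}$, and then (ii) gives $\rho_n=\rho_{n-1}+1$; hence $b$ reproduces at most once while its rank equals any fixed value.

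First I would use (i) and (iii) to write $R_k=\sum_{r\ge k}\indset{A_r}$, where $A_r$ is the event that $b$ reproduces while its rank equals $r$ (every child of $b$ is born while $b$ has some rank $r\ge k$, and by (iii) at most one is born per rank). Thus $\E(R_k)=\sum_{r\ge k}\P(A_r)$, and it suffices to prove $\P(A_r)\le\mu(r)/\mu(\llbracket 1,r\rrbracket)$ for each $r\ge k$. Fix $r$ and let $\tau_r\defeq\inf\{n\ge0:\rho_n=r\}$, a stopping time for the filtration generated by $(\xi_n)$; since $A_r\subseteq\{\tau_r<\infty\}$ it is enough to bound $\P(A_r\mid\tau_r<\infty)$. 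On $\{\tau_r<\infty\}$, the strong Markov property makes $(\xi_{\tau_r+1},\xi_{\tau_r+2},\ldots)$ i.i.d.\ with law $\mu$ and independent of the past. While $b$ stays at rank $r$, at each step the event ``$b$ leaves rank $r$'' contains $\{\xi\le r\}$, of probability $\mu(\llbracket 1,r\rrbracket)$ by (ii), whereas ``this step is a reproduction step of $b$'' is exactly $\{\xi=r\}$, of probability $\mu(r)$, and is a sub-event of $b$ leaving rank $r$. Writing $g_j$ for the conditional probability that $b$ is still at rank $r$ after $j$ further steps, we get $g_0=1$, $g_j\le(1-\mu(\llbracket 1,r\rrbracket))g_{j-1}$, and $\P(A_r\mid\tau_r<\infty)=\mu(r)\sum_{j\ge0}g_j\le\mu(r)/\mu(\llbracket 1,r\rrbracket)$; when $\mu(\llbracket 1,r\rrbracket)=0$ one also has $\mu(r)=0$, so $b$ never reproduces at rank $r$ and $\P(A_r)=0$, consistently with the convention $0/0=0$. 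Summing over $r\ge k$ gives $\E(R_k)\le\sum_{\ell=k}^\infty\mu(\ell)/\mu(\llbracket 1,\ell\rrbracket)$.

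The convergence of this series is then immediate: since $\mu$ is a probability measure on $\N$, $\mu(\llbracket 1,\ell\rrbracket)\uparrow1$, so there is $\ell_0$ with $\mu(\llbracket 1,\ell\rrbracket)\ge\tfrac12$ for all $\ell\ge\ell_0$, whence $\sum_{\ell\ge\ell_0}\mu(\ell)/\mu(\llbracket 1,\ell\rrbracket)\le2\sum_{\ell\ge\ell_0}\mu(\ell)\le2$, and each of the finitely many terms with $\ell<\ell_0$ is finite (equal to $0$ when its denominator vanishes).

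I expect the main obstacle to be the bookkeeping of the rank dynamics, especially the claim that an inserted ball always has rank at most $\xi_n$ and the resulting precise description of when $\rho_n$ increases. It is worth flagging that $\rho_n$ can also increase on steps with $\xi_n>\rho_{n-1}$ (when the new ball still lands to the right of $b$ with a small rank), so one cannot identify $\{\rho_n>\rho_{n-1}\}$ with $\{\xi_n\le\rho_{n-1}\}$; this is why the estimate only uses the lower bound $\P(b\text{ leaves rank }r)\ge\mu(\llbracket 1,r\rrbracket)$, which is exactly what ``new rank $\le\xi_n$'' provides.
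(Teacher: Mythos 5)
Your proof is correct and follows essentially the same strategy as the paper's: both track the rank of the tagged ball (the paper writes it as $M_n+h$, the number of balls to its right plus its height), observe that at most one child is born per rank value, and bound the probability of a birth at rank $\ell$ by $\mu(\ell)/\mu(\llbracket 1,\ell\rrbracket)$ before summing. The only cosmetic difference is how that per-rank bound is obtained — you use a stopping time and a geometric-trials estimate, while the paper conditions directly on the event that the ball count to the right increases — and your closing remark about the rank increasing even when $\xi_n>\rho_{n-1}$ correctly identifies why only a lower bound on the exit probability is needed.
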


\begin{proof}
Without loss of generality, we assume that the $k$th rightmost ball, written~$\partial$, is in the bin of index $0$. For all $n \in \N$, we denote by $M_n \defeq \sum_{j \geq 1} X_n(j)$ the total number of balls to the right of bin $0$ at time $n$ and by $N_n$ the total number of balls in bin $0$ at time $n$. We also denote by $h \defeq k -M_0$ the height at time $0$ (and at all times) of the ball $\partial$ inside bin $0$. Observe that at time $n$, the ball $\partial$ is the $(M_n+h)$th rightmost ball.

At each time $n \in \N$, one of two things might happen:
\begin{itemize}
  \item[-] With probability $\mu(\llbracket1,M_n+N_n\rrbracket)$, a new ball is added in a bin of positive index, and thus $M_{n+1} = M_n +1$. In that case, the probability that this new ball is an offspring of the ball $\partial$ is given by
  \begin{equation}
    \label{eqn:ub1}
    \frac{\mu(M_n + h)}{\mu(\llbracket 1,M_n+N_n\rrbracket)} \leq \frac{\mu(M_n + h)}{\mu(\llbracket 1,M_n+h\rrbracket)}.
  \end{equation}
  \item[-] With probability $1 -  \mu(\llbracket 1,M_n+N_n\rrbracket)$, a new ball is added in a bin of non-positive index (or is not added if $\xi_{n+1}$ is too large). In this case, $N_n$ might increase, or not, by $1$.
\end{itemize}
We now observe that the total number of children of $\partial$ is given by the sum
\[
  R_k = \sum_{n \geq 0} \ind{M_{n+1} = M_n + 1} \ind{\text{the new ball is a child of $\partial$}}
\]
We partition this sum according to the value $m$ taken by $M_n$ at each time $n$, combining with \eqref{eqn:ub1} and the Fubini theorem to obtain that
\begin{align*}
  \E(R_k) & = \sum_{n \geq 0} \P(M_{n+1} = M_n + 1, \text{the new ball is a child of $\partial$})\\
  & = \sum_{n \geq 0} \sum_{m \geq M_0} \P(M_n = m) \P(M_{n+1} = M_n+1 , \text{the new ball is a child of $\partial$}|M_n = m)\\
  & \leq \sum_{m=M_0}^\infty \frac{\mu(m+h)}{\mu(\llbracket 1,m+h \rrbracket)} = \sum_{\ell=k}^\infty  \frac{\mu(\ell)}{\mu(\llbracket 1,\ell\rrbracket)}.
\end{align*}
Finally, since $\sum_{j=1}^\infty \mu(j)=1$, we have
\[
\tfrac{\mu(\ell)}{\mu(\llbracket1,\ell\rrbracket)}\underset{\ell\rightarrow\infty}{\sim}\mu(\ell),
\]
hence
\[
\sum_{\ell=k}^\infty  \frac{\mu(\ell)}{\mu(\llbracket 1,\ell\rrbracket)} <  \infty
\]
which completes the proof.
\end{proof}

A direct consequence of this result is that the set of non-frozen bins forms an interval.
\begin{corollary} \label{cor:cascadefinite}
Let $k \in \Z$ and $X_0 \in \S$ such that $X_0(k) < \infty$. We have
$$\{X_\infty(k) = \infty\} \subset \{X_\infty(k-1) = \infty\}\quad\hbox{a.s.}$$
\end{corollary}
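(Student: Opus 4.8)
The plan is to exploit the genealogical structure introduced in Section~\ref{sec:preliminaries}: every ball that ever lands in bin $k$ is a child of a ball sitting in bin $k-1$, so bin $k$ can only keep growing if bin $k-1$ does so too.

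First I would dispose of a trivial case. If $X_0(k-1) = +\infty$ then, since $n \mapsto X_n(k-1)$ is non-decreasing, $X_\infty(k-1) = \infty$ surely, so $\{X_\infty(k-1) = \infty\}$ is the whole probability space and there is nothing to prove. We may therefore assume $X_0(k-1) < \infty$. The key structural observation is then the following consequence of the definition \eqref{eq:operatorPhi} of $\Phi_\xi$: a ball is added to bin $k$ at a time $n$ precisely when the selected ball at time $n$ --- i.e. the parent, in the genealogy of Section~\ref{sec:preliminaries}, of the ball created at that time --- lies in bin $k-1$; and conversely every child of a ball located in bin $k-1$ is placed in bin $k$. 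Since balls never change bins, the set of balls that ever occupy bin $k-1$ is a well-defined countable set, and the total number of balls ever added to bin $k$ after time $0$ is exactly the total number of children produced, over all time, by the balls of this set. Writing $R(b)$ for the total number of children of a ball $b$, this yields the identity
\[
  X_\infty(k) \;=\; X_0(k) + \sum_{b} R(b),
\]
the sum ranging over all balls $b$ ever occupying bin $k-1$.

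Next I would control the terms $R(b)$ using Lemma~\ref{lem:finiteOffspring}. For a ball $b$ present in $X_0$, the lemma directly gives $\E(R(b)) < \infty$, hence $R(b) < \infty$ a.s. For the ball $b$ born at a later (deterministic) time $n \ge 1$, I would apply the Markov property: conditionally on $X_n$, the process $(X_{n+m}, m \ge 0)$ is again an infinite-bin model with law $\mu$ started from the configuration $X_n$, in which $b$ is the $j$th rightmost ball for some $j$, so Lemma~\ref{lem:finiteOffspring} yields $\E(R(b)\mid X_n) < \infty$ and therefore $R(b) < \infty$ a.s. Since the whole system contains only countably many balls, a countable union gives that, almost surely, every ball has a finite total number of children. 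On the event $\{X_\infty(k-1) < \infty\}$, only finitely many balls ever occupy bin $k-1$; intersecting this event with the above full-measure event, the sum in the displayed identity is a finite sum of finite terms, so $X_\infty(k) < \infty$. This shows $\P\big(X_\infty(k) = \infty,\ X_\infty(k-1) < \infty\big) = 0$, which is the claimed inclusion.

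The step requiring the most care is the transfer of Lemma~\ref{lem:finiteOffspring} to balls not present in the initial configuration: one must restart the infinite-bin model at each deterministic time $n \ge 1$, use the Markov property to view the ball born at time $n$ as a ranked ball of the random configuration $X_n$, and then check that the exceptional null set produced by the lemma can be taken uniformly over the countable family of all balls ever created, so that the countable-union argument is legitimate.
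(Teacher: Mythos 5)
Your proof is correct and follows essentially the same route as the paper: on the event $\{X_\infty(k-1)<\infty\}\cap\{X_\infty(k)=\infty\}$ some ball of bin $k-1$ must have infinitely many children, which is almost surely impossible by Lemma~\ref{lem:finiteOffspring} applied (via the Markov property, for balls born after time $0$) to the countably many balls of the system. Your write-up is in fact more explicit than the paper's, which leaves the restarting argument for later-born balls implicit.
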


\begin{proof}
We observe that on the event $\{X_\infty(k-1) < \infty\} \cap \{X_\infty(k) = \infty\}$ at least one of the balls in the bin $k-1$ has had an infinite number of offspring, which is a.s. impossible given the bound obtained in Lemma~\ref{lem:finiteOffspring}, and as there is only a countable number of balls involved in the process.
\end{proof}

\subsection{Freezing for bounded configuration}

We prove here Theorem~\ref{thm:unifbounded} which states that for any probability distribution $\mu$, if the initial configuration $X_0$ of the infinite-bin model is bounded, then freezing occurs almost surely for all bins. It is a consequence of the following slightly stronger result.

\begin{proposition}\label{prop:bounded} For any $A\in \N$, there exists a constant $C_A <\infty$ depending only on $A$ and the distribution $(\mu(n), n\geq 0)$ such that, for any initial configuration $X_0 \in \S$ satisfying
$$\sup_{k \in \Z} X_0(k) \leq A,$$
it holds that
\begin{equation}\label{eq:unifboundE}
\sup_{k \in \Z} \E(X_\infty(k)) < C_A.
\end{equation}
In particular, if $X_0 \in \S$ is a bounded configuration, then $X_\infty(k) < \infty$ a.s. for all $k\in \Z$ so every bin freezes.
\end{proposition}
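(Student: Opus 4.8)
The plan is to bound $\E(X_\infty(k))$ uniformly in $k$ by controlling the total offspring of all balls that ever sit in bin $k-1$, using Lemma~\ref{lem:finiteOffspring}. The key point is that every ball added to bin $k$ (after time $0$) is a child of some ball located in bin $k-1$, so $X_\infty(k) \leq X_0(k) + \sum_{\partial \in \text{bin } k-1} R(\partial)$, where the sum is over all balls $\partial$ that are ever present in bin $k-1$ (including those added over time) and $R(\partial)$ denotes the total progeny of $\partial$. Since $X_0(k) \leq A$, it suffices to bound the expected total offspring produced while sitting in bin $k-1$; iterating this observation leftwards will relate bin $k$ to bin $k-1$ to bin $k-2$, etc., so the crux is to get a bound on the ``per-bin offspring'' that is summable as we move left.

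First I would set up the estimate from Lemma~\ref{lem:finiteOffspring}: if a ball $\partial$ is the $k$th rightmost ball at the moment it is created (or at time $0$), then $\E(R(\partial) \mid \mathcal F_{\text{birth}}) \leq \sum_{\ell \geq k} \mu(\ell)/\mu(\llbracket 1,\ell\rrbracket) =: \rho(k)$, a deterministic quantity with $\rho(k) \to 0$ as $k \to \infty$. The tricky part is that the rank of a ball in bin $k-1$ at its birth is random and depends on how many balls currently lie to its right. Here is where boundedness enters: if $X_0$ is bounded by $A$, then at time $0$ there are at most $A$ balls in each of the finitely-or-infinitely many bins to the right, but there could be infinitely many such bins, so the rank of a ball in a fixed bin $k-1$ is not bounded a priori. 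The right way around this is to run the argument ``from the front'': let $\phi_n = F(X_n) - k + 1$ be (roughly) the number of bins strictly to the right of bin $k-1$ that could ever receive balls relevant to us, and instead argue inductively on the distance from the front. Concretely, I would prove by induction on $j \geq 1$ a statement of the form: there are constants $c_j$ with $\sum_j c_j < \infty$ such that for any bounded-by-$A$ configuration, $\E(\#\{\text{balls ever in the bin at distance } j \text{ behind the front}\}) \leq c_j$ — but the front itself moves, so this needs care.

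A cleaner route, which I expect to be the one that works, is the following. Fix $k$ and consider the bins $k, k-1, k-2, \dots$. Let $Y_j = X_\infty(k - j)$ be the final number of balls in bin $k-j$. Every ball in bin $k-j$ (beyond the initial $X_0(k-j) \leq A$) is a child of a ball in bin $k-j-1$, and a ball $\partial$ in bin $k-j-1$, when it gives birth, has rank at least $1 + (\text{number of balls strictly to the right of bin } k-j-1)$. Crucially, at the time $\partial$ produces its $i$th child, there are already at least $i-1$ balls in bin $k-j$ to the right of all of bin $k-j-1$; summing the bound $\rho(\cdot)$ over these successive ranks and over all balls in bin $k-j-1$ gives $\E(Y_j \mid Y_{j+1}) \leq A + \sum_{\partial} \E(R(\partial)\mid\cdot)$, and because the ranks seen by successive births in bin $k-j-1$ are all distinct and $\geq$ some increasing sequence, the total is at most $A + \sum_{m \geq 1} \rho(m) \cdot \mathbf 1_{\{Y_{j+1} \geq m\}} \cdot(\text{something summable})$. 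The main obstacle — and what I would spend the most effort on — is making this telescoping rigorous: showing that the contributions $\rho$ accumulated along the chain of bins $k, k-1, k-2, \ldots$ form a convergent series uniformly in the (bounded) initial data, so that $\sup_k \E(X_\infty(k)) \leq C_A < \infty$. Once the expectation bound \eqref{eq:unifboundE} is established, the almost-sure finiteness $X_\infty(k) < \infty$ a.s. is immediate since a nonnegative random variable with finite expectation is finite a.s., and this holds simultaneously for all $k \in \Z$ by countable union; this in turn yields Theorem~\ref{thm:unifbounded}.
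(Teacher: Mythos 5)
Your overall strategy---a leftward recursion relating $\E(X_\infty(k))$ to $\E(X_\infty(k-1))$---is the right one, and your genealogical route via Lemma~\ref{lem:finiteOffspring} could in principle be completed (the paper argues differently: it computes, for the $n$th ball ever added strictly to the right of bin $k-1$, the conditional probability that it lands precisely in bin $k$, and sums these). But as written the proposal has two genuine gaps, the first of which you flag yourself without resolving. The recursion must be a \emph{contraction}, and what makes it one is not summability of the quantities $\rho(m)=\sum_{\ell\geq m}\mu(\ell)/\mu(\llbracket 1,\ell\rrbracket)$ --- indeed $\sum_m\rho(m)$ is comparable to $\sum_\ell \ell\,\mu(\ell)$, which diverges exactly in the interesting case where $\mu$ has no first moment --- but merely the fact that $\rho(m)\to 0$, so that one may fix $m_0$ with $\rho(m)\leq\alpha<1$ for $m>m_0$ and obtain $\E(X_\infty(k))\leq A+C_0+\alpha\,\E(X_\infty(k-1))$. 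To get there from your decomposition you need the observation that the ball sitting at height $h$ in bin $k-1$ has rank at least $h$ at its birth (equivalently, the birth ranks of successive balls added to that bin strictly increase), whence $\sum_{\partial}\E(R(\partial))\leq \sum_{h\geq 1}\rho(h)\,\P(X_\infty(k-1)\geq h)\leq C_0+\alpha\,\E(X_\infty(k-1))$. Your appeal to the number of balls \emph{to the right} of bin $k-j-1$ (which need not grow) and the unexplained ``$(\text{something summable})$'' factor do not pin this down; this is precisely the step you defer as ``the main obstacle''.

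Second, even granting the contraction, the recursion runs over the infinitely many bins $k,k-1,k-2,\dots$ and has no starting point: iterating gives $\E(X_\infty(k))\leq (A+C_0)\sum_{i<m}\alpha^i+\alpha^m\E(X_\infty(k-m))$, and since you do not know a priori that any $\E(X_\infty(k-m))$ is finite (that is what you are proving), you cannot let $m\to\infty$. The paper handles this by first running the whole argument for the truncated configurations $X^{(\ell)}_0 = X_0 \ind{\cdot \geq \ell}$, for which no ball is ever added to bin $\ell$ and hence $\E(X^{(\ell)}_\infty(\ell))\leq A$, giving $\E(X^{(\ell)}_\infty(k))\leq (A+n_0)/(1-\alpha)$ for all $k\geq\ell$ by finite induction, and then letting $\ell\to-\infty$ using Fatou's lemma and monotone convergence. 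Your proposal omits this initialization entirely, and without it the telescoping argument does not close.
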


\begin{proof}
Fix a configuration $X_0 \in \S$ such that $\sup_{k \in \Z} X_0(k) \leq A$ and fix $k\in \Z$. We define $((T_n,r_n), n \geq 0)$ as the sequence of times at which a new ball is added to a bin strictly to the right of the bin $k-1$, together with the rank of its parent. Fixing $T_0\defeq 0$, we set
\[
  T_{n+1} \defeq \inf\big\{m > T_n : \xi_m \leq \sum_{i=k-1}^\infty X_{m-1}(i) \big\} \quad \text{and} \quad r_n \defeq \xi_{T_n}.
\]
Denoting $M_n$ the total number of balls in the bins strictly to the right of $k-1$ at time $T_n$, we have that
$M_n \defeq \sum_{i=k}^\infty X_{T_n}(i) = M_0 + n$, and that $r_n$ has law $\mu$ conditioned on being at most equal to $M_n + X_{T_{n}}(k-1)$.

For any $n_0 \in \N$, we have the upper bound
\begin{equation}\label{eq:upbound1}
X_{\infty}(k) = X_0(k) + \sum_{n=0}^{+\infty} \ind{r_n > M_n}  \leq A + n_0 + \sum_{n=n_0}^{+\infty} \ind{r_n > M_n}.
\end{equation}
We recall the notation $\bar{\mu}(n) = \mu(\llbracket n,\infty\llbracket)$ for the tail of $\mu$. We fix $n_0$ large enough such that
$$
\alpha  \defeq \frac{\bar{\mu}(n_0)}{\mu(\llbracket1,n_0\rrbracket)} < 1.
$$
Taking the expectation in \eqref{eq:upbound1} conditionally on $\mathcal{H} = \sigma(X_{T_n}(k-1), n \geq 0)$, we find that
\begin{align*}
\E( X_\infty(k) \;|\; \mathcal{H} )  & \leq A + n_0 + \sum_{n=n_0}^{+\infty} \P(r_n > M_n \;| \; \mathcal{H})\\
& =  A  + n_0 + \sum_{n=n_0}^{+\infty} \frac{\mu(\llbracket M_n+1,M_n+X_{T_n}(k-1)\rrbracket)}{\mu(\llbracket 1,M_n+X_{T_n}(k-1)\rrbracket)}\\
& \leq A  + n_0 +  \frac{\sum_{n=n_0}^\infty \mu(\llbracket M_0 + n + 1,M_0 + n + X_{\infty}(k-1)\rrbracket)}{\mu(\llbracket 1,n_0 \rrbracket)}  \\
& \leq A  + n_0 +  \frac{\bar{\mu}(M_0 + n_0 + 1) X_\infty(k-1)}{\mu(\llbracket 1,n_0 \rrbracket)}  \\
& \leq A  + n_0 + \alpha  X_\infty(k-1).
\end{align*}
Taking the expectation on both sides of the inequality above, we obtain a recursion inequality valid for all $k\in \Z$,
\begin{equation}\label{eq:recbounded}
\E( X_\infty(k) ) \leq A  + n_0 + \alpha  \E(X_\infty(k-1)).
\end{equation}
Let $\ell\leq0$ and consider now the truncated configuration $X^{(\ell)}_0 = X_0 \ind{. \geq \ell}$ where all the balls in the bins on the left of bin $\ell$ have been removed. We take $\ell$ to be small enough so that $X^{(\ell)}_0$ is non-empty. We denote by $X^{(\ell)}$ the IBM process starting from $X^{(\ell)}_0$, using the same sequence $(\xi_n)$ to construct $X^{(\ell)}_n$ recursively by \eqref{eqn:ibm}. By definition of the dynamics, no ball can ever be added to bin $\ell$  so
\[
  \E(X^{(\ell)}_\infty(\ell))  = X^{(\ell)}_0(\ell) \leq A.
\]
In view of \eqref{eq:recbounded}, it follows by induction that, for all $k\geq \ell$, we have
$$
\E(X^{(\ell)}_\infty(k))  \leq C_A \defeq \frac{A + n_0}{1-\alpha}.
$$
Finally, for each fixed $k$ and $n$, we have $\lim_{\ell \to -\infty} X_n^{(\ell)}(k) = X_n(k)$ a.s. Indeed, we have either $\sum_{k \in \Z} X_0(k) < \infty$, then $X_0 = X_0^\ell$ for $\ell$ large enough, or $\sum_{k \in \Z} X_0(k) = \infty$, then a.s., for $-\ell$ large enough we have $\max_{j \leq n} \xi_j \leq \sum_{k = \ell}^\infty X_0(k)$, therefore $X^{(\ell)}_n(k) = X_n(k)$.  As a result, by Fatou's Lemma, for any $k\in\Z$,
\[
\E(X_n(k)) \leq \liminf_{l \to -\infty} \E(X_n^{(\ell)}(k))  \leq C_A.
\]
By monotone convergence, we conclude that, for any $k\in\Z$,
\[
  \E(X_\infty(k)) = \lim_{n \to +\infty} \E(X_n(k)) \leq C_A.
\]
This completes the proof of \eqref{eq:unifboundE} and also of Theorem~\ref{thm:unifbounded} since a random variable with finite expectation is finite a.s.
\end{proof}

\subsection{Proof of Item \texorpdfstring{$2.$}{2.} of Theorem~\ref{thm:finitevinfinite}}

We assume here that $\mu$ is of infinite type and we explain how to construct an initial configuration $X^*_0 \in \S$ such that
\begin{equation}\label{eq:confXstar}
X^*_0(k) < \infty \quad \hbox{and} \quad X^*_\infty(k) = \infty \hbox{ a.s.}\quad \hbox{for all $k\in\Z$.}
\end{equation}

\begin{proof}[Proof of Item $2.$ of Theorem~\ref{thm:finitevinfinite}]
Some technicalities in the argument below appear because the support $T_\mu\subset\mathbb N$ of the distribution $\mu$ is not necessarily equal to $\N$. However, because $\mu$ is of infinite type, its support $T_\mu$ must be infinite. Recall that we write $F(X)$ for the position of the front of the configuration $X \in \S$, we then denote by $\S_\mu$ the set of all configurations $X\in \S$ satisfying the following conditions:
\begin{enumerate}
\item $X(0)=\infty$,
\item for all $k \in \llbracket 1, F(X) \rrbracket$ we have $X(k)\in T_\mu$ and $X(k)\geq X(k+1)$.
\end{enumerate}
Recall that $\widehat{X}_0$ defined by \eqref{def:widehatX} denotes the configuration with a single infinite barrier at $0$. It is not hard to see that any configuration $X\in \S_\mu$ is a reachable state for the IBM($\mu$) starting from $\widehat{X}_0$. Thus, by definition of a distribution of infinite type, for any initial condition $X_0\in S_\mu$, the process will eventually yield $X_\infty(k) = \infty$ a.s. for all $k \geq 1$. Additionally, if we define, for $h \in \N$,
\[
  X^{(h)}_0(k) \defeq
  \begin{cases}
    X_0(k) &\text{ if } k \neq 0,\\
    h &\text{ otherwise,}
  \end{cases}
\]
then it is straightforward from the definition of the IBM that, for all fixed $n, k \in \N$ one has
$$\lim_{h \to \infty} X^{(h)}_n(k) = X_n(k)\quad\hbox{a.s.}$$
As a result, using the monotonicity of the process  we conclude that, for all $k\geq 1$,
$$\lim_{h \to \infty} X^{(h)}_\infty(k) = X_\infty(k) = \infty \quad\hbox{a.s.}$$

Thanks to the above observation, we now construct a locally finite initial configuration $X^*_0$ satisfying \eqref{eq:confXstar}. We start by setting $X^*_0(k) \defeq 0$ for all $k \geq 0$, and we define recursively $X^*_0(-k)$ for $k > 0$ as follows. For all $k \in \N$, there exists $n_k \geq 1$ such that the infinite-bin model with the starting configuration $Y_0$ defined by
$$
Y_0(j) \defeq
\begin{cases}
r  & \hbox{if $j = -k$,}\\
X^*_0(j) & \hbox{if $j \in \llbracket -k + 1, -1\rrbracket$,}\\
0 & \hbox{otherwise,}
\end{cases}
$$
satisfies $\P( \min_{j \in \llbracket -k,k\rrbracket} Y_\infty(j) \leq k ) \leq 2^{-k}$ when $r \geq n_k$.
We assign to $X^*_0(-k)$ any value in $T_\mu$ that is greater than $\max(n_k , X^*_0(-k+1))$.

Using again the monotonicity of the process, it is a direct consequence of this construction that $X^*_\infty(k) = \infty$ a.s. for all $k \in \Z$.
\end{proof}

\section{Coupling of IBMs for different initial configurations}
\label{sec:coupling}

In this section, we prove Theorem~\ref{thm:01law} and Item $1.$ of Theorem~\ref{thm:finitevinfinite}. Both results make repetitive use of couplings of IBMs starting from different initial configurations. We start with the following easy lemma which relates the evolution of two IBMs under the usual coupling.

\begin{lemma}\label{lemma:couplage1} Let $X_0 , \tilde{X}_0 \in \S$ denote two initial configurations with the same position $\delta$ for their rightmost infinite barrier (if it exists):
$$
\delta \defeq \delta(X_0) = \delta(\tilde{X}_0) \in \Z\cup \{-\infty\}.
$$
Fix $j_0 > \delta$ and assume that the following condition holds true at time $n=0$
\begin{equation}
\hbox{for all $j \in \rrbracket \delta,  j_0 \rrbracket$,}\quad\tilde{X}_n(j) = X_n(j) \quad\hbox{and}\quad \sum_{k =j}^{\infty} X_n(k) = \sum_{k=  j}^{\infty} \tilde{X}_n(k).\tag{H1}\label{hypH1}
\end{equation}
Let $(X_n)$ and $(\tilde{X}_n)$ be defined by the recursion equation \eqref{eqn:ibm} using the same sequence $(\xi_n)$. Then \eqref{hypH1} holds true for all times $n$. Furthermore, we have, for all $n\geq 0$ and all $j\in \rrbracket \delta, j_0 + 1\rrbracket$,
\begin{equation}\label{incH1}
\tilde{X}_{n+1} (j) = \tilde{X}_{n} (j) + 1 \quad \Longleftrightarrow  \quad X_{n+1} (j) = X_{n} (j) + 1.\qquad\hbox{a.s.}
\end{equation}
\end{lemma}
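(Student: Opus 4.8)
The plan is to proceed by induction on $n$, showing simultaneously that \eqref{hypH1} is preserved and that the equivalence \eqref{incH1} holds at each step. The base case $n=0$ is the hypothesis. For the inductive step, assume \eqref{hypH1} holds at time $n$; I want to understand where the ball added at time $n+1$ lands in each of the two configurations, as a function of $\xi_{n+1}$. Recall that the new ball is placed immediately to the right of the $\xi_{n+1}$th rightmost ball, i.e. in bin $B(X_n,\xi_{n+1})+1$ for $X$ and in bin $B(\tilde X_n,\xi_{n+1})+1$ for $\tilde X$.

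First I would establish the key observation that, for every $\xi \in \N$, either $B(X_n,\xi)+1 \le \delta$ (equivalently $\le j_0$ fails to be reached because the ball falls at or left of the barrier), or $B(X_n,\xi)+1 \in \rrbracket\delta, j_0+1\rrbracket$, and in the latter case $B(X_n,\xi) = B(\tilde X_n,\xi)$. This is where the second half of \eqref{hypH1} — equality of the tail sums $\sum_{k\ge j} X_n(k) = \sum_{k\ge j}\tilde X_n(k)$ for $j\in\rrbracket\delta,j_0\rrbracket$ — does the work: the rank of a ball in bin $j$ is controlled by the tail sum $\sum_{k\ge j}X_n(k)$, so if $B(X_n,\xi)$ lies in $\rrbracket\delta, j_0\rrbracket$ then $\xi$ is sandwiched between two tail sums that agree in both configurations, forcing $B(\tilde X_n,\xi) = B(X_n,\xi)$. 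One should treat carefully the edge case where the parent ball sits exactly in bin $j_0$ (so the child lands in $j_0+1$): here one only knows $X_n(j_0)=\tilde X_n(j_0)$ and $\sum_{k\ge j_0}X_n(k)=\sum_{k\ge j_0}\tilde X_n(k)$, which is exactly enough to pin down that $B(X_n,\xi)=j_0 \iff B(\tilde X_n,\xi)=j_0$. Conversely, if the new ball in $X$ lands at position $\le \delta$ or far to the right of $j_0+1$, one checks the same happens for $\tilde X$ (using that a barrier at $\delta$ is never passed, and that to land strictly right of $j_0+1$ the parent must be strictly right of $j_0$, where again the tail sums agree).

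Once this dichotomy is in hand, \eqref{incH1} is immediate: for $j\in\rrbracket\delta,j_0+1\rrbracket$, the bin $j$ receives the new ball in $X$ iff $B(X_n,\xi_{n+1})=j-1$ iff $B(\tilde X_n,\xi_{n+1})=j-1$ iff bin $j$ receives the new ball in $\tilde X$. Preservation of the first part of \eqref{hypH1} (pointwise equality on $\rrbracket\delta,j_0\rrbracket$) then follows from \eqref{incH1} applied to $j\in\rrbracket\delta,j_0\rrbracket$. For the tail-sum part at time $n+1$: for $j\in\rrbracket\delta,j_0\rrbracket$, the tail sum $\sum_{k\ge j}X_{n+1}(k)$ increases by $1$ over $\sum_{k\ge j}X_n(k)$ precisely when the new ball lands in some bin $\ge j$, i.e. when $B(X_n,\xi_{n+1})\ge j-1$; by the dichotomy this event coincides with the corresponding event for $\tilde X$ (either both new balls land at the same position in $\rrbracket\delta,j_0+1\rrbracket$, or both land strictly right of $j_0$, or both land $\le\delta < j$), so the two tail sums stay equal. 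This closes the induction.

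The main obstacle is the careful bookkeeping at the two ``boundary'' positions $\delta$ and $j_0$: one must verify that landing at or left of the barrier, and landing just past $j_0$, are synchronized between the two configurations using only the information that \eqref{hypH1} provides (which is strictly less than full agreement of the configurations). Everything else is a routine unwinding of the definitions of $B(\cdot,\cdot)$ and $\Phi_\xi$.
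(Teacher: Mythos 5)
Your plan is correct and follows essentially the same route as the paper: induction on $n$, a case split according to whether the parent ball of rank $\xi_{n+1}$ lies to the right of $j_0$ or not, and the observation that agreement of the tail sums (together with pointwise agreement on $\rrbracket\delta,j_0\rrbracket$) pins down the landing bin identically in both configurations. If anything you are more explicit than the paper about the boundary case where the new ball lands in bin $j_0+1$; the only superfluous item is your case $B(X_n,\xi)+1\le\delta$, which never occurs since a barrier at $\delta$ forces $B(X_n,\xi)\ge\delta$.
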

\begin{proof}
We prove the result by induction on $n$. It is true for $n=0$ by assumption. Suppose now that \eqref{hypH1} holds for a given $n\geq 0$ and consider $\xi_{n+1}$. There are two cases:
\begin{itemize}
\item If $\xi_{n+1} \leq \sum_{k =j_0}^{\infty} X_n(k) = \sum_{k=  j_0}^{\infty} \tilde{X}_n(k)$, then we add a ball to $X_n$ and a ball in $\tilde{X}_n$, possibly at different locations, but always in a bin strictly on the right of $j_0$ in both cases. Therefore, \eqref{hypH1} holds for $n+1$ (as well as \eqref{incH1} because no ball is added to bin $k$).
\item If $\xi_{n+1} > \sum_{k =j_0}^{\infty} X_n(k) = \sum_{k=  j_0}^{\infty} \tilde{X}_n(k)$, then thanks to \eqref{hypH1}, the ball ranked $\xi_{n+1}$  in $X_n$ is at the same location and same height as the ball ranked $\xi_{n+1}$  in $\tilde{X}_n$. Thus, the ball added at time $n+1$  is at the same location in $X_{n+1}$ and $\tilde{X}_{n+1}$ hence \eqref{hypH1} and \eqref{incH1} still hold.
\end{itemize}
\end{proof}

We can improve on the previous lemma by coupling in a larger class of initial configurations if we assume that the distribution $\mu$ is monotonic.

 \begin{lemma}\label{lemma:couplage2} Assume that $(\mu(n), n \geq 1)$ is non-increasing. Let $X_0 , \tilde{X}_0 \in \S$ denote two initial configurations with the same position $\delta$ for their rightmost infinite barrier (if it exists):
$$
\delta \defeq \delta(X_0) = \delta(\tilde{X}_0) \in \Z\cup \{-\infty\}.
$$
Fix $j_0 > \delta$ and assume that the following condition holds true at time $n=0$
\begin{equation}
\hbox{for all $j \in \rrbracket \delta,  j_0 \rrbracket$,}\quad\tilde{X}_n(j) \leq X_n(j) \quad\hbox{and}\quad \sum_{k =j}^{\infty} X_n(k) \leq \sum_{k=  j}^{\infty} \tilde{X}_n(k).\tag{H2}\label{hypH2}
\end{equation}
 then, there exists a coupling of two infinite-bin models $(X_n)$ and $(\tilde{X}_n)$ with distribution $\mu$, starting  respectively from $X_0$ and $\tilde{X}_0$ such that \eqref{hypH2} holds for all times $n$. Furthermore, this coupling also satisfies that, for $n\geq 0$ and all $j \in \rrbracket \delta, j_0 + 1\rrbracket$,
\begin{equation}\label{incH2}
\tilde{X}_{n+1} (j) = \tilde{X}_{n} (j) + 1 \quad \Longrightarrow  \quad X_{n+1} (j) = X_{n} (j) + 1.\qquad\hbox{a.s.}
\end{equation}
\end{lemma}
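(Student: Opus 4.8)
The plan is to build the coupling one step at a time: by the Markov property it suffices to show that, given two configurations $X_n,\tilde X_n$ satisfying \eqref{hypH2}, one can draw a pair $(\xi_{n+1},\tilde\xi_{n+1})$, each with law $\mu$, so that $X_{n+1}=\Phi_{\xi_{n+1}}(X_n)$ and $\tilde X_{n+1}=\Phi_{\tilde\xi_{n+1}}(\tilde X_n)$ again satisfy \eqref{hypH2} and additionally satisfy \eqref{incH2}; feeding fresh independent randomness at each step then produces two processes driven by i.i.d.\ $\mu$-sequences, hence two bona fide IBMs with law $\mu$. So fix $n$ and abbreviate $s_j\defeq\sum_{k\ge j}X_n(k)$, $\tilde s_j\defeq\sum_{k\ge j}\tilde X_n(k)$, $T\defeq\sum_k X_n(k)$ and $\tilde T\defeq\sum_k\tilde X_n(k)$, noting that $s_j,\tilde s_j<\infty$ for $j>\delta$ while $s_\delta=\tilde s_\delta=+\infty$ when $\delta$ is finite. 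Drawing a rank $r$ adds a ball to $X_n$ in the bin $B(X_n,r)+1$, which equals a given $j>\delta$ exactly when $r\in I_j\defeq\llbracket s_j+1,\,s_{j-1}\rrbracket$, and adds no ball when $r>T$; define $\tilde I_j$ from $\tilde X_n$ in the same way. The intervals $I_j$ (for $j>\delta$) together with $\llbracket T+1,\infty\llbracket$ partition $\N$, and likewise for the tilded family.

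I claim it then suffices to realise the one-step coupling so that the following holds: if the new ball of $\tilde X_n$ lands in a bin of index $\le j_0+1$, then the new ball of $X_n$ lands in that very same bin; and if no ball is added to $\tilde X_n$, then no ball is added to $X_n$. Call this property $(\star)$. Granting $(\star)$, \eqref{incH2} is immediate. For the first inequality of \eqref{hypH2} at $j\in\rrbracket\delta,j_0\rrbracket$ there is nothing to do when $\tilde X_n(j)<X_n(j)$, while if $\tilde X_n(j)=X_n(j)$ a ball landing in bin $j$ of $\tilde X_n$ is matched in $X_n$ by $(\star)$. For the second inequality at such $j$, it is preserved automatically when $s_j<\tilde s_j$; when $s_j=\tilde s_j$ one must check that a ball landing in a bin $\ge j$ of $X_n$ forces a ball landing in a bin $\ge j$ of $\tilde X_n$, and this is a two-line case analysis on where the new ball of $\tilde X_n$ goes ($\ge j_0+2$: trivial since $j_0+2>j$; no ball: impossible by $(\star)$; $\le j_0+1$: then $(\star)$ equates the two locations).

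The heart of the matter — the only point where monotonicity of $\mu$ is used — is the inequality
\[
  \mu(I_j)\ \ge\ \mu(\tilde I_j)\quad\text{for all }j\in\rrbracket\delta,\,j_0+1\rrbracket,\qquad\text{and}\qquad\bar{\mu}(T+1)\ \ge\ \bar{\mu}(\tilde T+1).
\]
For $j\le j_0$, \eqref{hypH2} yields $s_j\le\tilde s_j$ and $X_n(j-1)\ge\tilde X_n(j-1)$ (with the convention $X_n(\delta)=\tilde X_n(\delta)=\infty$), so $I_j$ begins no later than $\tilde I_j$ and contains at least as many integers; since $(\mu(n),n\ge1)$ is non-increasing it therefore carries at least as much mass. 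The case $j=j_0+1$ follows once one notices that \eqref{hypH2} self-propagates one bin to the right, $s_{j_0+1}=s_{j_0}-X_n(j_0)\le\tilde s_{j_0}-\tilde X_n(j_0)=\tilde s_{j_0+1}$, and then argues exactly as before; finally $T=\lim_{j\to-\infty}s_j\le\lim_{j\to-\infty}\tilde s_j=\tilde T$ and $\bar{\mu}$ is non-increasing. Granting this, I define the coupling as follows: draw $\tilde\xi_{n+1}\sim\mu$; if it lands in $\tilde I_j$ for some $j\le j_0+1$ (resp.\ exceeds $\tilde T$), draw $\xi_{n+1}$, using fresh randomness, from $\mu$ conditioned to lie in $I_j$ (resp.\ above $T$); otherwise (the new ball of $\tilde X_n$ lands in a bin of index $\ge j_0+2$) draw $\xi_{n+1}$ from the ``residual'' law obtained from $\mu$ by deleting the masses allocated in the first case and renormalising. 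The displayed inequalities make the residual a non-negative measure, and since the $I_j$ partition $\N$ a one-line count shows its total mass equals the probability that the new ball of $\tilde X_n$ lands in a bin of index $\ge j_0+2$; hence $\xi_{n+1}\sim\mu$, and $(\star)$ holds by construction.

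The step I expect to be the genuine obstacle is the inequality $\mu(I_j)\ge\mu(\tilde I_j)$ — more precisely, recognising that \eqref{hypH2} is exactly the hypothesis making $I_j$ both ``at least as far to the left as'' and ``at least as long as'' $\tilde I_j$, so that monotonicity of $\mu$ closes the comparison, together with the observation that \eqref{hypH2} automatically extends to index $j_0+1$. Everything else — patching the coupling together from this family of one-sided dominations, and checking that $(\star)$ propagates \eqref{hypH2} and yields \eqref{incH2} — is routine bookkeeping. A little care is needed in the degenerate situations: a locally finite configuration carrying only finitely many balls (so that ``no ball is added'' has positive probability, which is why $(\star)$ includes that clause and why the comparison $\bar{\mu}(T+1)\ge\bar{\mu}(\tilde T+1)$ is needed) and configurations whose front lies to the left of $j_0$; but the conventions introduced above absorb both.
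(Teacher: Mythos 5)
Your argument is correct and establishes the lemma by a slightly different construction of the one-step coupling (the overall scheme --- one step, then induction via the Markov property --- is the same). The paper matches \emph{individual balls}: it draws $\xi\sim\mu$ for the $X$-process first, and when $\xi$ is the rank $k$ of a ball of $X_0$ lying in $\rrbracket\delta,j_0\rrbracket$, it accepts the matched ball of $\tilde X_0$ (same location and height, rank $\tilde k\ge k$) with probability $\mu(\tilde k)/\mu(k)\le1$, routing the rejected mass through an independent $\chi\sim\mu(\cdot\mid\rrbracket K,\tilde K\rrbracket)$ which lands $\tilde X$'s new ball to the right of $j_0+1$. You instead aggregate over each bin: the set of ranks sending a ball into bin $j$ is an interval $I_j$ which, under \eqref{hypH2} and the convention at the barrier, starts no later and is no shorter than $\tilde I_j$, whence $\mu(I_j)\ge\mu(\tilde I_j)$ by monotonicity; you then draw $\tilde\xi\sim\mu$ first and produce $\xi$ by conditioning on $I_j$, with a residual measure for the remaining case. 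Your interval inequality is precisely the sum over heights $h$ of the paper's ball-wise inequality $\mu(s_j+h)\ge\mu(\tilde s_j+h)$, so the use of monotonicity is identical; what changes is the resolution (bins versus balls) and the direction of the conditioning. Both couplings are equally valid here, since your property $(\star)$ is all that \eqref{hypH2} and \eqref{incH2} require, though the paper's finer ball-level matching would additionally preserve genealogical information, which yours discards. The delicate points --- well-definedness of the conditional laws, non-negativity and total mass of the residual, the extension of \eqref{hypH2} to index $j_0+1$, and the degenerate cases of a finite barrier or finitely many balls --- are all handled correctly.
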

See Figure~\ref{fig:couplage} for an illustration of two configurations $X$ and $\tilde{X}$ that satisfy assumption \eqref{hypH2}.
\begin{remark} If $X_0$ and $\tilde{X}_0$ satisfy \eqref{hypH2}, then the same holds true after applying either of the following two elementary operations: (1) We add a ball in $\tilde{X}_0$ at a location $j > j_0$. (2) We move a ball of $\tilde{X}_0$ from a location $j \leq j_0$ to a new location $j' > j_0$.
\end{remark}

\begin{figure}
\begin{center}
\includegraphics[height=5cm]{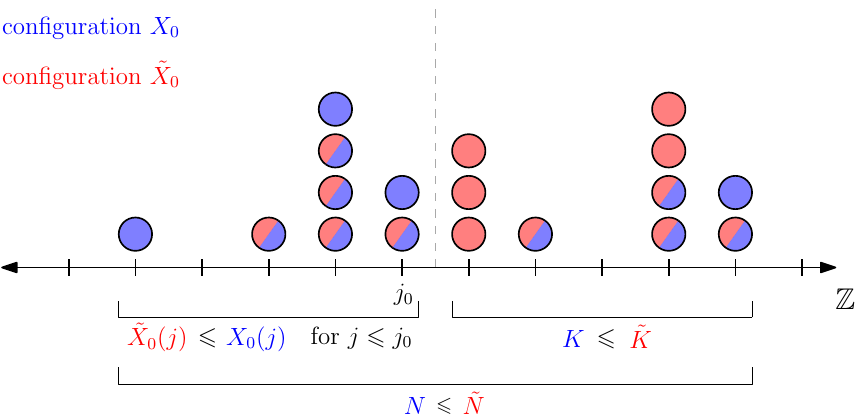}
\end{center}
\caption{\label{fig:couplage}Two configurations $X_0$ (in blue) and $\tilde{X}_0$ (in red) that satisfy \eqref{hypH2}. There are more red balls than blue balls located in $\llbracket j_0 + 1,+\infty\llbracket$ ($K = 5 \leq 9 = \tilde{K}$) and also more red balls than blue balls in total  ($N = 13 \leq 14 = \tilde{N}$) but there are at least as many blue balls than red balls in every bin located at $j < j_0$.}
\end{figure}

\begin{proof} We just need to construct the coupling for the first step and then we can proceed by induction afterward thanks to the Markov property. Let $$N \defeq \sum_{j > \delta} X_0(j) \quad\hbox{and} \quad \tilde{N} \defeq \sum_{j > \delta} \tilde{X}_0(j)$$ denote the total number of balls in configuration $\tilde{X}$ and $\tilde{X}_0$ respectively. We have $N\leq \tilde{N}\leq\infty$ according to \eqref{hypH2} --with quantities being possibly infinite when  $\delta(X) =-\infty$. We also define the number of balls strictly on the right of bin $j_0$:
$$K \defeq \sum_{j = j_0 + 1}^\infty X_0(j) \quad \hbox{and} \quad \tilde{K} \defeq \sum_{j = j_0 + 1}^\infty \tilde{X}_0(j).$$
We also have $K \leq \tilde{K} < \infty$ thanks to \eqref{hypH2}. Furthermore, if $K = \tilde{K}$ then, we must have $\tilde{X}_0(j) = X_0(j)$ for all $j\in \rrbracket \delta, j_0\rrbracket$. This exactly means that \eqref{hypH1} is satisfied so we are back in the setting of the previous lemma and we can use the trivial coupling to prove the result at the next step. Thus, we now assume that $K < \tilde{K}$.

We also assume without loss of generality that $\mu(\llbracket1, K\rrbracket) < 1$ otherwise the result is again trivial. Because $\mu$ is non-increasing, this assumption is equivalent to requiring $\mu(K+1) > 0$.

Let $\xi$, $\chi$ and $U$ denote three independent random variables such that
\begin{itemize}
\item $\xi$ is distributed according to $\mu$.
\item $\chi$ is distributed according to the conditional law $\mu( \cdot \; | \; \rrbracket K, \tilde{K}\rrbracket)$.
\item $U$ is uniform in $[0,1]$.
\end{itemize}
We construct a random variable $\tilde{\xi}$ depending on $\xi$ in the following way:
\begin{enumerate}
\item[(a)] If $\xi \leq K$, we set $\tilde{\xi} \defeq \xi$.
\item[(b)] If $\xi > \tilde{N}$, we set $\tilde{\xi} \defeq \xi$.
\item[(c)] If $\xi \in \rrbracket N ,  \tilde{N}\rrbracket$, we set $\tilde{\xi} \defeq \chi$.
\item[(d)] If $\xi = k \in \rrbracket K,N\rrbracket$, then $k$ is the rank of a ball in $X_0$. If a ball with the same location and same height also exists in configuration $\tilde{X}_0$, then we denote $\tilde{k}$ its rank in $\tilde{X}_0$. We set
$$
\tilde{\xi} \defeq \begin{cases}
\tilde{k} & \hbox{if $\tilde{k}$ exists and $U < \frac{\mu(\tilde{k})}{\mu(k)}$,}\\
\chi & \hbox{otherwise.}
\end{cases}
$$
\end{enumerate}
We have $K \leq N \leq \tilde{N}$ so the four cases (a), (b), (c) and (d) are disjoint and $\tilde{\xi}$ is well-defined. We now define the coupling:
$$X_1 \defeq \Phi_\xi(X_0) \quad \hbox{and}\quad \tilde{X}_1 \defeq \Phi_{\tilde\xi}(\tilde{X}_0).$$
where $\Phi$ is the operator defined in \eqref{eq:operatorPhi}. We check that \eqref{hypH2} holds for $n=1$. Indeed, we observe that
\begin{itemize}
\item In case (a), configurations $X_1$ and $\tilde{X}_1$ are obtained by adding a ball into a bin, not necessarily the same, but located in $\llbracket j_0 + 2, +\infty\llbracket$ in both cases.
\item In case (b), we have $X_1 = X_0$ and $\tilde{X}_1 = \tilde{X}_0$ if $\delta = -\infty$. Otherwise, the new ball is added in both cases to the bin $\delta +1$.
\item In case (c), we have $X_1 = X_0$ (if $\delta =-\infty$) or $X_1(\delta+1) = X_0(\delta+1)+1$ (if $\delta > -\infty$), while $\tilde{X}_1$ is obtained by adding a ball to $\tilde{X}_0$ into a bin located somewhere in $\llbracket j_0+2,+\infty\llbracket \subset \rrbracket \delta,\infty\llbracket$.
\item In case (d), if $\tilde{k}$ exists and $U < \mu(\tilde{k}) / \mu(k)$  then $X_1$ and $\tilde{X}_1$ are obtained by adding a ball in the same bin for both configuration. Otherwise, a ball is added in a bin located somewhere in $\rrbracket \delta, j_0 + 1\rrbracket$ for configuration $X_0$ whereas a ball is added in a bin located somewhere in $\llbracket j_0 + 2, +\infty\llbracket$ for configuration $\tilde{X}_0$.
\end{itemize}
In all cases, hypothesis \eqref{hypH2} is preserved. It is also clear that \eqref{incH2} holds because a ball is added in $\tilde{X}$ at some location $\leq j_0 + 1$ when $\tilde{\xi} \in \rrbracket\tilde{K}, \tilde{N}\rrbracket$ but this can only occur in case (d) and a ball is always added at the same location in $X$ when this happens.

It remains to prove that $\tilde{\xi}$ has distribution $\mu$.
\begin{itemize}
\item Let $i \leq K$ or $i > \tilde{N}$, we have by construction
$$
\P(\tilde{\xi} = i) = \P(\xi = i) = \mu(i).
$$
\item Let $i \in \rrbracket\tilde{K} , \tilde{N}\rrbracket$. We observe that the ball with rank $i$ in $\tilde{X}_0$ is in a bin at some location $\rrbracket \delta, j_0\rrbracket$. Because of Assumption \eqref{hypH2}, a ball with the same location and same height also exists in $X_0$ and its corresponding rank $k$ in $X_0$ satisfies $k \leq i$. Therefore, we get
$$
\P(\tilde{\xi} = i) = \P\Big(\xi = k,\; U < \frac{\mu(i)}{\mu(k)}\Big) = \mu(k) \P\Big(U < \frac{\mu(i)}{\mu(k)}\Big) = \mu(i),
$$
where we used the fact $0 \leq \mu(i) / \mu(k) \leq 1$ because $(\mu(n))$ is non-increasing.
\item Let $i \in \rrbracket K, \tilde{K}\rrbracket$. We can only have $\tilde{\xi} = i$ in cases (c) or (d) when $\tilde{\xi} = \chi$. But by construction, the event $\{\tilde{\xi} = \chi\}$ depends only on $\xi$ and $U$ which are independent of $\chi$. Therefore,
$$
\P(\tilde{\xi} = i)  =  \P(\tilde{\xi} = \chi,\; \chi = i)
 =  \P(\tilde{\xi} = \chi) \P(\chi = i)
 =  \alpha \frac{\mu(i)}{\mu(\rrbracket K, \tilde{K}\rrbracket)}
$$
for some constant $\alpha \in [0,1]$. Finally, since $\tilde{\xi}$ is a non-defective random variable, we have $\sum \P(\tilde{\xi} = i) = 1$ which implies that $\alpha = \mu(\rrbracket K, \tilde{K}\rrbracket)$ and the proof of the lemma is complete. \qedhere
\end{itemize}
\end{proof}

\subsection{\texorpdfstring{$0-1$}{0-1} law for the freezing of a bin}

We assume here that $(\mu(n), n \geq 1)$ is non-increasing and we show that, for any initial configuration $X_0 \in \S$ and any $k\in \Z$, we have
\begin{equation*}
\P(X_\infty(k) = \infty) \in \{0, 1\}.
\end{equation*}
\begin{proof}[Proof of Theorem~\ref{thm:01law}]
Let $k \in \N$, for all $n \in \N$, we set
\[
B_n \defeq \{ X_{n}(k) > X_{n-1}(k)\},
\]
the event when the $n$th ball added in the process goes into bin $k$. We prove that the events $(B_n, n \geq 1)$ are negatively correlated, which is enough to complete the proof of the $0-1$ law. Indeed, observe that if
\[
  \sum_{n \geq 1} \P(B_n) < \infty,
\]
then by Borel-Cantelli lemma, we have $X_n(k) = X_{n-1}(k)$ a.s. for all $n$ large enough therefore $\P(X_\infty(k) = \infty) = 0$.  Alternatively, if
\[
  \sum_{n \geq 1} \P(B_n) = \infty,
\]
because the events are negatively correlated, we have that $B_n$ holds for infinitely many $n$ a.s.  (c.f. \cite{KochenStone64}), which shows that $\P(X_\infty(k) = \infty) = 1$. Therefore, we just need to prove that, for all $n < m$, we have
\[
  \P(B_n \cap B_m) \leq \P(B_n)\P(B_m).
\]
Note that up to a shift in the initial condition, we can assume without loss of generality that $k=0$. Additionally, using the Markov property, we have
$$\P(B_n \cap B_m|X_1,\ldots, X_{n-1}) = \P_{X_{n-1}}(B_{m-n+1} \cap B_1) = \P_{X_{n-1}}(B_1) \P_{X_{n-1}}(B_{m-n+1}|B_1) $$
with the usual notation $\P_X$ to denote a probability under which the IBM starts from configuration $X$. Therefore, it is enough to show that for any initial configuration, and all $m \geq 1$, we have
\begin{equation}\label{eq:01negcor}
  \P_{X}(B_{m+1}|B_1) \leq \P_{X}(B_{m+1}).
\end{equation}
We write $A, B=B_1, C$ the events where the first ball is added to a bin of negative, null, or positive index respectively. We first check that
\begin{equation}\label{eq:condiBC1}
  \P_{X}(B_{m+1}|B_1) = \P_{X}(B_{m+1}|C).
\end{equation}
This equality follows from Lemma~\ref{lemma:couplage1} with $j_0 \defeq -1$, $X_0$ the configuration obtained from $X$ by adding a ball in a bin at a positive index and
$\tilde{X}_0$ the configuration obtained from $X$ by adding a ball in bin $0$. With this setting, \eqref{eq:condiBC1} is a direct consequence of equivalence \eqref{incH1} applied at $j = j_0 + 1 = 0$.

We also claim that
\begin{equation}\label{eq:condiBC2}
    \P_X(B_{m+1}|A) \geq \P_X(B_{m+1}| B_1).
\end{equation}
This inequality now follows from Lemma~\ref{lemma:couplage2}, with $j_0 \defeq -1$, $X_0$ the configuration obtained from $X$ by adding a ball in a bin of negative index and $\tilde{X}_0$ the configuration obtained from $X$ by adding a ball in bin $0$. With this setting, \eqref{eq:condiBC2} is now a direct consequence of implication \eqref{incH2} applied again at $j = j_0 + 1 = 0$.

Finally, combining \eqref{eq:condiBC1} and \eqref{eq:condiBC2} we find that
\begin{align*}
\P_X(B_{m+1}) & = \P_X(B_{m+1}|A) \P_X(A) + \P_X(B_{m+1}|B) \P_X(B) + \P_X(B_{m+1}|C) \P_X(C) \\
& \geq \P_X(B_{m+1}|B) \big(\P_X(A) +  \P_X(B) + \P_X(C) \big)\\
& = \P_X(B_{m+1}|B)
\end{align*}
which establishes \eqref{eq:01negcor} hence completes the proof of the theorem.
\end{proof}

\subsection{Freezing for finite type distribution and locally bounded configurations}

We prove in this subsection Item 1. of Theorem~\ref{thm:finitevinfinite}. We need the following lemma.

\begin{lemma}\label{lem:pos0} Assume that $(\mu(n), n\geq 1)$ is non-increasing and has finite type $d < \infty$. Let $X_0$ be a locally finite configuration with its front at position $0$. Let $X_0^{(\delta)}$ be  the configuration constructed from $X_0$ by placing an infinite barrier at position $\delta < 0$:
\begin{equation}\label{eq:conflemmaw}
X_0^{(\delta)}(j) \defeq \begin{cases}
X_0(j) & \hbox{if $j\neq \delta$,}\\
+\infty & \hbox{if $j = \delta$.}
\end{cases}
\end{equation}
Then, for any $\delta, k \in \Z$ such that $\delta + d < k \leq 0$, we have
$$
\P( X^{(\delta)}_\infty(k) = X^{(\delta)}_0(k) ) > 0
$$
\emph{i.e.} with positive probability, no ball is ever added to bin $k$ starting from configuration $X_0^{(\delta)}$.
\end{lemma}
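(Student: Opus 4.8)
Write $F\defeq\{X_\infty^{(\delta)}(k)=X_0^{(\delta)}(k)\}$. Since a ball enters bin $k$ exactly when the chosen ball lies in bin $k-1$, the event $F$ is precisely the event that no ball of bin $k-1$ is ever selected by $(\xi_n)$; the goal is $\P(F)>0$. The idea is that on $F$ bin $k$ is inert, so the front of the configuration keeps marching past $0$ and the number $G_n\defeq\sum_{j\ge k}X_n(j)$ of balls weakly to the right of bin $k-1$ grows at a linear rate; this pushes the ranks occupied by bin $k-1$ off to infinity, and since $\mu$ is a probability measure ($\sum_n\mu(n)=1<\infty$) the per-step probabilities of hitting bin $k-1$ become summable — provided the \emph{number} of balls in bin $k-1$ stays bounded, which is where finite type enters. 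As a first step I would record that $X_\infty^{(\delta)}(k-1)<\infty$ a.s.: only the bins $\llbracket\delta+1,0\rrbracket$ are relevant, and $X_0^{(\delta)}$ differs from the translate of $\widehat X_0$ to $\delta$ by finitely many extra balls (finitely many bins, each with finitely many balls); by Lemma~\ref{lem:finiteOffspring} these have a.s.\ finite total progeny, so $X_\infty^{(\delta)}(j)$ exceeds the corresponding translated-barrier count by an a.s.\ finite amount, and by the definition of type $d$ together with Corollary~\ref{cor:cascadefinite} the latter is finite for all $j\ge\delta+d$, in particular $j=k-1$.

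\textbf{A deterministic head start.} For $n_0\in\N$ let $A_{n_0}\defeq\{\xi_1=\cdots=\xi_{n_0}=1\}$, which has probability $\mu(1)^{n_0}>0$ (note $\mu(1)>0$ since $\mu$ is a non-increasing probability). On $A_{n_0}$ the front simply marches from $0$ to $n_0$, depositing one ball in each of the bins $1,\dots,n_0$ and leaving every bin $\le 0$ untouched; so if $Y$ denotes the deterministic configuration reached at time $n_0$, then bin $k-1$ still has $X_0(k-1)$ balls and $\sum_{j\ge k}Y(j)\ge n_0$. By the Markov property $\P(F)\ge\mu(1)^{n_0}\,\P_Y(F)$, so it suffices to produce one $n_0$ with $\P_Y(F)>0$. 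Moreover $Y$ is obtained from $X_0^{(\delta)}$ by adding balls only in bins $>k-1$, so by the monotone coupling (Lemma~\ref{lemma:couplage2} with $j_0=k-1$, applying \eqref{incH2} at $j=k-1$) the variable $X_\infty^{(\delta)}(k-1)$ under $\P_Y$ is stochastically dominated by $X_\infty^{(\delta)}(k-1)$ under $\P$; hence, using Step~1, a constant $C$ with $\P_Y(X_\infty^{(\delta)}(k-1)>C)$ as small as we like can be chosen \emph{uniformly in $n_0$}.

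\textbf{The quantitative estimate.} Under $\P_Y$ let $\tau$ be the first time a ball of bin $k-1$ is chosen, so $F=\{\tau=\infty\}$. On $\{\tau>n-1\}$ the process up to time $n-1$ coincides with the one in which bin $k$ is frozen, so $G_{n-1}\ge G_{n_0}\ge n_0$ is non-decreasing, and whenever $\xi_n\le G_{n-1}$ — conditional probability $\ge 1-\bar\mu(n_0+1)$ — a ball is added strictly to the right of bin $k-1$, so $G_n=G_{n-1}+1$; thus on this event $G_n$ grows essentially linearly (with rate $\to1$ as $n_0\to\infty$). The ranks occupied by bin $k-1$ at time $n-1$ form the block $\llbracket G_{n-1}+1,\,G_{n-1}+X_{n-1}(k-1)\rrbracket$, and $X_{n-1}(k-1)\le X_\infty^{(\delta)}(k-1)$ on $\{\tau>n-1\}$, so using that $\mu$ is non-increasing and splitting at $\{X_\infty^{(\delta)}(k-1)\le C\}$,
\[
\P_Y(\tau<\infty)\ \le\ \sum_{n>n_0}\E_Y\!\big[\ind{\tau>n-1}\,X_{n-1}(k-1)\,\mu(G_{n-1}+1)\big]\ \le\ C\sum_{n>n_0}\E_Y\!\big[\ind{\tau>n-1}\,\mu(G_{n-1}+1)\big]+\varepsilon,
\]
where $\varepsilon=\P_Y(X_\infty^{(\delta)}(k-1)>C)$ (up to a bounded factor). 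Bounding the summand on the event $\{G_{n-1}\gtrsim n-n_0\}$ by $\mu$ evaluated at a linearly growing argument, and on the complement by $\mu(n_0+1)$ times an exponentially small probability (Chernoff bound on the growth of $G$), and using both $\sum_m\mu(m)=1$ and the elementary fact $n_0\mu(n_0)\to0$ for a non-increasing summable $\mu$, the whole sum is seen to tend to $0$ as $n_0\to\infty$. Hence, first fixing $C$ so that $\varepsilon<\tfrac12$ and then $n_0$ large, $\P_Y(\tau<\infty)<1$, i.e.\ $\P_Y(F)>0$; combined with the head-start bound this gives $\P(F)>0$.

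\textbf{Expected main obstacle.} The real work is all in the third step: (i) making the linear growth of $G_n$ on $\{\tau>n-1\}$ precise — the bins strictly between the barrier at $\delta$ and bin $k-1$ keep receiving balls from the barrier's ``cascade'', and one has to verify this never slows down the growth of $G_n$; and (ii) handling the conditioning cleanly, since $X_{n-1}(k-1)$ and $\tau$ are not independent — this is the reason for routing the argument through the $n_0$-uniform bound $C$ of Step~2 rather than conditioning on $F$ from the outset. The coupling in Step~1 that realises $X^{(\delta)}$ as the translated barrier process perturbed by finitely many extra balls is also a point that needs to be set up carefully, though it is of a standard type.
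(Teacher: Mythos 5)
Your overall route is genuinely different from the paper's: you run a quantitative first-moment estimate (head start of length $n_0$, linear growth of the mass $G_n$ to the right of bin $k-1$, summability of $\mu$ evaluated along a linearly growing argument), whereas the paper gives a soft argument — it finds a configuration $Y_0$ reachable with positive probability from both $X_0^{(\delta)}$ and the pure barrier $\hat{X}_0^{(\delta)}$, uses the type-$d$ property and monotone convergence to find a finite configuration $Z_0$ from which bin $k$ is never hit with positive probability, then modifies the driving sequence and invokes Lemma~\ref{lemma:couplage1} to transport this back to $X_0^{(\delta)}$. Your Steps 2 and 3 are essentially sound (the conditioning issue you worry about is handled by $\{X_\infty^{(\delta)}(k-1)\le C\}\subset\{X_{n-1}(k-1)\le C\}$, and the tail sum is bounded by $\mu(n_0)+\bar{\mu}(n_0)/c'$ plus a Chernoff correction, which indeed tends to $0$).

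The genuine gap is Step 1. First, Lemma~\ref{lem:finiteOffspring} controls only the number of \emph{children} of a ball, not its total progeny over all generations; the expected number of children of a newborn ball near the front is $\sum_{\ell\ge 1}\mu(\ell)/\mu(\llbracket 1,\ell\rrbracket)$, which can exceed $1$, so no naive Galton--Watson domination yields a.s.\ finite total progeny of the finitely many extra balls. Second, and more seriously, even granting that, the decomposition ``$X^{(\delta)}=$ translated barrier process $+$ finite perturbation'' is not a coupling statement: within the run of $X^{(\delta)}$, the extra balls and their descendants shift the ranks of the barrier's descendants, so the barrier part of $X^{(\delta)}$ is \emph{not} distributed as (nor obviously dominated by) $\hat{X}^{(\delta)}$, and neither hypothesis of Lemma~\ref{lemma:couplage2} applies to compare $X_0^{(\delta)}$ with the smaller configuration $\hat{X}_0^{(\delta)}$. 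The fix is exactly the paper's reachability idea: a common configuration is reachable with positive probability from both $\hat{X}_0^{(\delta)}$ and $X_0^{(\delta)}$, whence $\P(X^{(\delta)}_\infty(k-1)<\infty)\ge\eta>0$; this weaker statement already suffices for your Step 3 (choose $C$ with $\varepsilon\le 1-\eta/2$ uniformly in $n_0$, then take $n_0$ large), or it can be upgraded to an a.s.\ statement via Theorem~\ref{thm:01law}. With that repair your argument closes, but the missing ingredient is precisely the reachability mechanism on which the paper's proof is built.
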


\begin{proof}
The result is trivial when $\mu$ has bounded support $\T_\mu \subset \llbracket 0,K \rrbracket$ for some $K\in \N$ because there is probability at least $\mu(1)^K > 0$ that the front of the IBM moves to the right at the first $K$ steps (and then no ball can ever be added at locations $\leq 0$ afterward). We now assume that $\mu$ has unbounded support. Because  $(\mu(n), n\geq 1)$ is non-increasing, this implies that $\mu(n) > 0$ for all $n\in \N$.

Fix $k,\delta$ such that $\delta + d < k \leq 0$. Let $\hat{X}_0^{(\delta)}$ denote the configuration with an infinite bin located at $\delta$ and all other bins empty. Because $X_0^{(\delta)}$ and $\hat{X}_0^{(\delta)}$ differ only by a finite number of balls, we can construct a configuration $Y_0$ such that the IBM($\mu$) can transition from $\hat{X}_0^{(\delta)}$ to $Y_0$ and also from $X_0^{(\delta)}$ to $Y_0$, in both cases with positive probability in a finite number of steps. By definition of $\mu$ being of type $d$ and because $\delta+d < k$, we have $\P(\hat{X}_\infty^{(\delta)}(k) < \infty) = 1$ and therefore, $\P(Y_\infty(k) < \infty) = 1$. By monotone convergence, this means that, for $n_0$ large enough, we have $\P(Y_\infty(k) = Y_{n_0}(k)) > 0$. Observe now that there is only a finite number of configurations that can be reached at time $n_0$ when starting from $Y_0$ (because $Y_0$ has only a finite number of balls on the right of its infinite barrier at $\delta$). Therefore, there exists a configuration  $Z_0$ with $\P(Y_{n_0} = Z_0) > 0$ such that
$$
\P(Z_{\infty}(k) = Z_0(k)) > 0.
$$
By construction, this configuration $Z_0$ can be reached by the IBM($\mu$) starting from $X^{(\delta)}_0$ with positive probability in a finite (say $n$) number of steps. This means that there exists $v_1,v_2,\ldots, v_n \in \N$ such that $\{ X^{(\delta)}_n = Z_0 \} \supset \{\xi_1 = v_1, \ldots, \xi_n = v_n \}$ with the IBM constructed via \eqref{eqn:ibm}. Consider now the configuration $\tilde{Z}_0$ obtained when $\{\xi_1 = \tilde{v}_1, \ldots, \xi_n = \tilde{v}_n \}$ with
$$
\tilde{v}_i \defeq \begin{cases}
    1 & \hbox{if $v_i \leq \sum_{i=k-1}^\infty X^{(\delta)}_{0}(j)$,}\\
    v_i & \hbox{otherwise.}
\end{cases}
$$
Because $\mu(1) > 0$, we have $\P(X^{(\delta)}_n = \tilde{Z}_0) > 0$. By construction, we have $\tilde{Z}_0(k) = X^{(\delta)}_0(k)$ and
$$
 \hbox{for all $j \in \rrbracket\delta , k-1 \rrbracket$.}\qquad \tilde{Z}_0(j) = Z_0(j) \quad\hbox{and}\quad \sum_{i\geq j} \tilde{Z}_0(i) = \sum_{i\geq j} Z_0(i)
$$
Thus, we can invoke Lemma~\ref{lemma:couplage1} with $j_0=k-1$, $Z_0$ and $\tilde{Z}_0$, to deduce from \eqref{incH1} with $j = j_0 + 1 = k$ that
$$
\P(\tilde{Z}_\infty(k) = \tilde{Z}_0(k)) = \P(Z_\infty(k) = Z_0(k)) > 0.
$$
Finally, by the Markov property, we conclude that
$$
\P(X^{(\delta)}_\infty(k) = X^{(\delta)}_{0}(k)) \; \geq \; \P(X^{(\delta)}_{n} = \tilde{Z}_0) \P(\tilde{Z}_\infty(k) = \tilde{Z}_0(k)) \; >\;  0.
$$
\end{proof}

We can now prove Item 1. of Theorem~\ref{thm:finitevinfinite}: we assume again that $(\mu(n), n\geq 0)$ is non-increasing with finite type $d < \infty$ and show that, for any locally finite configuration $X_0$, we have
$$
X_\infty(k) < \infty \quad\hbox{a.s. for all $k\in\Z$.}
$$

\begin{proof}[Proof of Theorem~\ref{thm:finitevinfinite}]
By translation invariance, we can assume without loss of generality that the front of $X_0$ is at location $0$. Furthermore, thanks to Corollary~\ref{cor:cascadefinite}, if bin $k$ freezes, then all bins on its right also freeze so we only need to prove the result for bins at location $k\leq 0$.

Fix $k\leq 0$ and let $X^{(\delta)}_0$ denote again the configuration \eqref{eq:conflemmaw}. We fix $\delta$ such that $\delta + d < k$. We consider the usual coupling between $X^{(\delta)}$ and $X$ where we use the same $\xi_i$'s for both processes. We define $E \defeq \{ X^{(\delta)}_0(k) = X^{(\delta)}_\infty(k) \}$ the event that no ball is ever added to bin $k$ by the IBM starting from $X_0^{(\delta)}$. Lemma~\ref{lem:pos0} ensures that
$$
\P(E) > 0.
$$
We claim that, on $E$, a.s., it holds that, for all $n$,
\begin{equation}\label{eq:induc}
X^{(\delta)}_n(j) = X_n(j) \;\hbox{ for $j\geq k$}\qquad\hbox{and}\qquad \sum_{i = j}^\infty X^{(\delta)}_n(i) \geq \sum_{i = j}^\infty X_n(i)  \;\hbox{ for $\delta < j <  k$.}
\end{equation}
The proof is similar to that of Lemma~\ref{lemma:couplage1} and we proceed by induction. The result is clear for $n=0$. Assume it holds true for some $n$ and consider $\xi_{n+1}$. Because we are on the event $E$, we cannot have $\sum_{i = k}^\infty X^{(\delta)}_n(i) <  \xi_{n+1} \leq  \sum_{i = k - 1}^\infty X^{(\delta)}_n(i)$ as this would add a ball in bin $k$ at time $n+1$ for the process $X^{(\delta)}$. Therefore,
\begin{itemize}
\item Either $\xi_{n+1} \leq \sum_{i = k}^\infty X_n(i) = \sum_{i = k}^\infty X^{(\delta)}_n(i)$ and we add the same ball at the same location $j > k$ for both processes $X$ and $X^{(\delta)}$.
\item Or $\xi_{n+1} > \sum_{i = k - 1}^\infty X^{(\delta)}_n(i)$ in which case $X^{(\delta)}$ adds a ball at some location $j  < k$ and $X$ adds a ball at some location $j' \leq j$ because of the induction hypothesis.
\end{itemize}
Therefore \eqref{eq:induc} holds at time $n+1$ hence at all times.
As a consequence of \eqref{eq:induc}, we find that  $E \subset \{ X_\infty(k) = X_0(k) \}$ a.s. and therefore,
$$
\P( X_\infty(k) < \infty)  \geq \P(X_\infty(k) = X_0(k)) \geq \P(E) > 0.
$$
By the $0-1$ law of Theorem~\ref{thm:01law}, we conclude that $\P( X_\infty(k) < \infty) = 1$ so bin $k$ freezes a.s. and the proof is complete.
\end{proof}

\section{IBM starting with an infinite barrier at \texorpdfstring{$0$}{0}}
\label{sec:typeK}

We work in this section under the assumption that $(\mu(n), n \geq 1)$ is a regularly varying sequence, i.e. that there exists $\alpha \geq 1$ such that for all $u > 0$, we have
\begin{equation}
  \label{eqn:muRegularlyVarying}
  \lim_{n \to \infty} \frac{\mu(\floor{n u})}{\mu(n)}  = u^{-\alpha}.
\end{equation}
We call $-\alpha$ the index of the regularly varying sequence $(\mu(n), n \geq 1)$. Under this condition, we recall that the function
\begin{equation}
  \label{eqn:slowlyVarying}
  L(n) \defeq n^{\alpha} \mu(n)
\end{equation}
is a slowly varying function, i.e. it satisfies $\lim_{n \to \infty} \frac{L(\floor{n u})}{L(n)} = 1$ for all $u > 0$.

We consider $(\hat{X}_n, n \geq 0)$, the IBM($\mu$) starting from the initial condition $\hat{X}_0$ defined by \eqref{def:widehatX} as the configuration with an infinite number of balls in the bin of index $0$ all other bins empty. We show here that under these conditions, for all $k \in \N$, the asymptotic growth rate of $\hat{X}_n(k)$ as $n \to \infty$ is deterministic. This, with a chaining argument, allows us to determine the type of an infinite-bin model. The main result of the section is the following, which is a refinement of Theorem~\ref{thm:typeK}.
\begin{proposition}
\label{prop:laProp}
Assume that $(\mu(n), n \geq 1)$ is regularly varying with index $-\alpha$. Recall the notation $\bar{\mu}(n) \defeq \mu(\llbracket n,+\infty\llbracket)$ for the tail of $\mu$. For all $k \in \N$, we have
$$
\P\Big( \hat{X}_\infty(k) < +\infty\Big) = \begin{cases}
    0 & \hbox{if }\sum_{j = 1}^\infty \bar{\mu}(j)^k = \infty,\\
    1 & \hbox{if }\sum_{j = 1}^\infty \bar{\mu}(j)^k < \infty.
\end{cases}
$$
Moreover,
\begin{enumerate}
\item If $\alpha > 1$, then for all $k\geq 1$ such that $\sum_{j = 1}^\infty \bar{\mu}(j)^k = \infty$, we have
\begin{equation*}
  \lim_{n \to \infty} \frac{\hat{X}_n(k)}{\sum_{j=1}^n \bar{\mu}(j)^k} = \frac{\Gamma\left(1 + \frac{1}{\alpha - 1}\right)}{\Gamma\left(k + \frac{1}{\alpha -1} \right)} \quad \text{a.s.}  
\end{equation*}
\item If $\alpha = 1$, then for all $k \in \N$,
$$\hat{X}_n(k) \underset{n\to\infty}{\sim} nL^{(k)}(n) \quad \hbox{a.s}$$ with $L^{(k)}$ a deterministic slowly varying function.
\end{enumerate}
\end{proposition}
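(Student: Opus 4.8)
The plan is to study the growth of $\hat X_n(k)$ recursively in $k$, exploiting the fact that under $\hat X_0$ the bins to the left of $0$ stay empty forever, so the process is entirely determined by what happens in bins $1,2,\dots$. The key structural observation is that bin $1$ receives a ball exactly when $\xi_{n+1} > \sum_{j\geq 1}\hat X_n(j)$, i.e. whenever the $\xi$-variable overshoots the total mass currently sitting strictly to the right of the barrier. More generally, bin $k$ receives a ball at time $n+1$ precisely when $\sum_{j\geq k}\hat X_n(j) < \xi_{n+1} \leq \sum_{j\geq k-1}\hat X_n(j)$. Writing $S^{(k)}_n \defeq \sum_{j\geq k}\hat X_n(j)$ for the number of balls weakly to the right of bin $k$, one has $S^{(1)}_n = n$ deterministically (every added ball lands in a bin of index $\geq 1$), and $\hat X_{n+1}(k) - \hat X_n(k) = \ind{S^{(k)}_n < \xi_{n+1} \leq S^{(k-1)}_n}$. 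Conditionally on the past, this is a Bernoulli variable with parameter $\mu(\llbracket S^{(k)}_n + 1, S^{(k-1)}_n\rrbracket)$, and since $S^{(1)}_n = n$, the increment of $\hat X_n(1)$ has conditional mean $\bar\mu(n+1)$. A Borel–Cantelli / second-moment argument along the lines suggested by the last sentence of Section~\ref{sec:introduction} (``a simple concentration result for sums of independent Bernoulli variables'') then gives $\hat X_n(1) \sim \sum_{j\leq n}\bar\mu(j)$ a.s.\ when this diverges, and a.s.\ finiteness when it converges; note $\sum_j \bar\mu(j) = \sum_j j\mu(j)$, recovering the first-moment criterion.

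The heart of the argument is the induction step: assuming $\hat X_n(k-1) \sim c_{k-1}\sum_{j\leq n}\bar\mu(j)^{k-1}$ (equivalently $S^{(k-1)}_n$ has a known deterministic asymptotic, obtained by summing $\hat X_n(1),\dots,\hat X_n(k-1)$ against the barrier and using regular variation to control the sum), deduce the asymptotics of $\hat X_n(k)$ and hence of $S^{(k)}_n$. The conditional increment probability of $\hat X_n(k)$ is $\mu(\llbracket S^{(k)}_n+1, S^{(k-1)}_n\rrbracket) = \bar\mu(S^{(k)}_n+1) - \bar\mu(S^{(k-1)}_n+1)$. Since $S^{(k)}_n \leq S^{(k-1)}_n$ and $S^{(k-1)}_n - S^{(k)}_n = \hat X_n(k) = o(S^{(k-1)}_n)$ (the $k$-th bin is much smaller than the cumulative mass to its right, by regular variation with $\alpha>1$), one can Taylor-expand: $\bar\mu$ is regularly varying of index $-(\alpha-1)$, so $\bar\mu(S^{(k)}_n) - \bar\mu(S^{(k-1)}_n) \approx (\alpha-1)\hat X_n(k)\,\bar\mu(S^{(k-1)}_n)/S^{(k-1)}_n$. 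Substituting the inductive asymptotics for $S^{(k-1)}_n$ turns the recursion $\hat X_{n+1}(k) - \hat X_n(k) \approx (\alpha-1)\hat X_n(k)\,\bar\mu(S^{(k-1)}_n)/S^{(k-1)}_n$ into an (almost-sure, after concentration) discrete differential equation whose solution is the claimed $\Gamma$-ratio constant; the $\Gamma$-functions arise from integrating $\bar\mu(j)^{k-1}/(\text{cumulative sum})$ against $\bar\mu(j)$, which by Karamata-type computations yields the Beta-integral producing $\Gamma(1+\tfrac1{\alpha-1})/\Gamma(k+\tfrac1{\alpha-1})$. The convergence/divergence dichotomy for $\sum_j\bar\mu(j)^k$ is then read off: if the sum diverges the bin grows to infinity at the stated rate; if it converges, the conditional expected total number of additions to bin $k$ is finite (bounded by $\sum_n \bar\mu(S^{(k-1)}_n)\cdot(\text{something})$, which one shows is comparable to $\sum_j\bar\mu(j)^k$), so by Borel–Cantelli the bin freezes a.s. The case $\alpha=1$ is handled separately: here $\bar\mu$ is slowly varying, every $\sum_j\bar\mu(j)^k$ diverges, and one shows inductively that $\hat X_n(k)\sim n L^{(k)}(n)$ for slowly varying $L^{(k)}$ defined recursively from $L^{(k-1)}$ via a Karamata integration, so no bin ever freezes (infinite type).

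The main obstacle I expect is twofold. First, propagating the \emph{almost-sure} asymptotic equivalence through the induction: the increments of $\hat X_n(k)$ are not independent (their conditional law depends on $S^{(k)}_n$ and $S^{(k-1)}_n$, which are themselves random), so one needs a robust concentration statement — likely an $L^2$ estimate showing $\hat X_n(k)$ concentrates around its conditional mean process, combined with control of how the random fluctuations of $S^{(k-1)}_n$ feed back into the increment probabilities. One must be careful that an $o(\cdot)$ error at level $k-1$ does not blow up when divided by $S^{(k-1)}_n$ and multiplied by $n$ terms. Second, the Taylor expansion of $\bar\mu$ requires a \emph{uniform} regular-variation estimate (a Potter-type bound) valid at the random arguments $S^{(k)}_n, S^{(k-1)}_n$, plus the a priori bound $\hat X_n(k) = o(S^{(k-1)}_n)$ which itself must be established before the refined asymptotics — presumably via a crude upper bound on $\hat X_n(k)$ (e.g.\ from Lemma~\ref{lem:finiteOffspring}-type reasoning or a direct union bound) showing it grows strictly slower than $S^{(k-1)}_n \asymp n\bar\mu(n)^{k-1}\cdot(\dots)$. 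Once these analytic lemmas are in place the recursion unwinds cleanly into the $\Gamma$-ratio.
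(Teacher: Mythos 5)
Your overall strategy is the paper's: induct on $k$, identify the conditional probability that bin $k$ receives a ball at time $n+1$ as the $\mu$-mass of the interval of ranks occupied by the balls of bin $k-1$, show via regular variation that this is a.s.\ asymptotically equivalent to a deterministic sequence proportional to $\bar{\mu}(n)^k$, and close the induction with an adapted-Bernoulli law of large numbers / conditional Borel--Cantelli lemma. The ``robust concentration statement'' you identify as the main obstacle is exactly Lemma~\ref{lem:llnber} (quoted from Freedman), and the $\alpha=1$ case is handled as you describe in Lemma~\ref{lem:alphaNul}.

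However, the key step of your induction is mis-indexed in a way that, taken literally, derails the computation. With $S^{(k)}_n \defeq \sum_{j\geq k}\hat{X}_n(j)$ one has $S^{(k-1)}_n - S^{(k)}_n = \hat{X}_n(k-1)$, \emph{not} $\hat{X}_n(k)$: the interval $\rrbracket S^{(k)}_n, S^{(k-1)}_n\rrbracket$ is precisely the set of ranks of the balls sitting in bin $k-1$. Hence the conditional increment probability of $\hat{X}_n(k)$ is an interval of length $\hat{X}_n(k-1)$ located near $S^{(1)}_n=n$, on which $\mu$ is essentially constant by uniform regular variation, so it is a.s.\ asymptotic to $\mu(n)\,\hat{X}_n(k-1)$ --- a quantity entirely determined by the induction hypothesis at level $k-1$. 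There is no self-referential ``discrete differential equation'' in $\hat{X}_n(k)$ to solve: one computes directly, via Karamata, that $\mu(n)\hat{X}_n(k-1)\sim \frac{\Gamma(1+\frac{1}{\alpha-1})}{\Gamma(k+\frac{1}{\alpha-1})}\bar{\mu}(n)^k$ and then sums using Lemma~\ref{lem:llnber}. Your recursion $\hat{X}_{n+1}(k)-\hat{X}_n(k)\approx(\alpha-1)\hat{X}_n(k)\bar{\mu}(S^{(k-1)}_n)/S^{(k-1)}_n$ has right-hand side $\sim \mu(n)\hat X_n(k)$ and would predict $\hat X_n(k)$ of order $\exp\bigl(\sum_{j\leq n}\mu(j)\bigr)$, i.e.\ bounded --- the wrong conclusion. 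Once the index is corrected your argument coincides with Lemma~\ref{lem:alphaPos}; in particular the a priori bound $\hat{X}_n(k)=o(S^{(k-1)}_n)$ you worry about is not needed separately, since only the bins $h\leq k-1$ enter the increment probability at level $k$ and these are all $o(n)$ by the induction hypothesis. (The boundary case $k(\alpha-1)=1$, where $\sum_j\bar\mu(j)^k$ may converge or diverge depending on the slowly varying factor, also deserves the explicit separate treatment the paper gives it.)
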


We will make use of the following Borel-Cantelli lemma/law of large numbers for sums of Bernoulli random variables.

\begin{lemma}\label{lem:llnber}
Let $(Z_n, n\geq 1)$ be a sequence of Bernoulli r.v. adapted to some filtration $(\mathcal{F}_n)$. Suppose that there exists a deterministic sequence $(p_n, n\geq 1)$ of positive numbers such that
$$
\lim_{n\to\infty} \frac{\P(Z_{n} = 1\;|\; \mathcal{F}_{n-1})}{p_n} = 1 \quad\hbox{a.s.}
$$
Then, we have
\begin{equation}\label{bernoA}
\P\Big(\sum_{k=n}^\infty Z_k < +\infty\Big) =
\begin{cases}
    1 & \hbox{if }\sum_{n=1}^\infty p_n < \infty,\\
    0 & \hbox{if }\sum_{n=1}^\infty p_n = \infty.
\end{cases}
\end{equation}
Furthermore, in the case that $\sum_{n=1}^\infty p_n = \infty$, we have
\begin{equation}\label{bernoB}
\lim_{n\to\infty}\frac{\sum_{k=1}^n Z_k}{\sum_{k=1}^n p_k} = 1\quad \hbox{a.s.}
\end{equation}
\end{lemma}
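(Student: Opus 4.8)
The statement is a conditional (martingale) version of the Borel--Cantelli lemma together with a strong law of large numbers, so the plan is to reduce everything to a single martingale argument. Write $q_n \defeq \P(Z_n = 1 \mid \mathcal{F}_{n-1})$, so that $q_n/p_n \to 1$ a.s. by hypothesis. Set $S_n \defeq \sum_{k=1}^n Z_k$ and $A_n \defeq \sum_{k=1}^n q_k$, and let $D_n \defeq S_n - A_n$. Then $(D_n)$ is a martingale with respect to $(\mathcal{F}_n)$, with increments bounded by $1$ in absolute value and conditional variances $\Var(D_n - D_{n-1}\mid \mathcal{F}_{n-1}) = q_n(1-q_n) \leq q_n$. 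First I would dispose of the case $\sum_n p_n < \infty$: since $q_n/p_n \to 1$ a.s., on a full-probability event we have $\sum_n q_n < \infty$, hence $\E\big(\sum_n Z_n \mid \mathcal{F}_\infty\text{-type conditioning}\big)$ is finite along that event; more directly, $\sum_n q_n < \infty$ implies by the conditional Borel--Cantelli lemma (Lévy's extension) that $\sum_n Z_n < \infty$ a.s., giving the first line of \eqref{bernoA}.

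For the case $\sum_n p_n = \infty$, the key point is that $A_n \to \infty$ a.s. (because $q_n \sim p_n$ and $\sum p_n = \infty$), and I want to show $S_n/A_n \to 1$ a.s.; combined with $A_n \sim \sum_{k\le n} p_k$ this yields both \eqref{bernoB} and the second line of \eqref{bernoA}. The standard tool is the strong law of large numbers for martingales: if $(D_n)$ is a martingale and $(B_n)$ is a nonnegative predictable nondecreasing sequence with $B_n \to \infty$ such that $\sum_n B_n^{-2}\,\E\big((D_n-D_{n-1})^2 \mid \mathcal{F}_{n-1}\big) < \infty$, then $D_n/B_n \to 0$ a.s. I would apply this with $B_n \defeq A_n$ (predictable since $q_k$ is $\mathcal{F}_{k-1}$-measurable), using the bound $\E\big((D_n - D_{n-1})^2\mid\mathcal{F}_{n-1}\big) = q_n(1-q_n)\le q_n = A_n - A_{n-1}$, so that
\[
  \sum_{n\ge 2} \frac{q_n}{A_n^2} \;\le\; \sum_{n\ge 2}\frac{A_n - A_{n-1}}{A_{n-1}A_n} \;=\; \sum_{n\ge2}\Big(\frac{1}{A_{n-1}} - \frac{1}{A_n}\Big) \;<\;\infty
\]
on the event $\{A_n \to \infty\}$ (a telescoping sum; one only needs $A_1 > 0$, which holds since $q_1 = p_1\cdot(q_1/p_1)$ and we may assume the first term positive, or simply start the sums from the first index where $A_n>0$). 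Hence $D_n/A_n \to 0$ a.s., i.e. $S_n/A_n \to 1$ a.s., and since $q_n/p_n\to 1$ with $\sum p_n = \infty$ gives $A_n/\sum_{k\le n}p_k \to 1$ by Cesàro/Toeplitz, we get $S_n/\sum_{k\le n}p_k \to 1$ a.s., which is \eqref{bernoB}; in particular $S_n\to\infty$ a.s., giving the second line of \eqref{bernoA}.

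The main obstacle is a bookkeeping one rather than a conceptual one: the hypothesis $q_n/p_n \to 1$ holds only almost surely, not identically, so $A_n$ is a random (albeit predictable) sequence and one must be slightly careful that all the deterministic-looking manipulations (the telescoping bound, the Toeplitz lemma comparing $A_n$ with $\sum p_k$, and the claim $A_n\to\infty$) are performed on the single full-probability event where $q_n/p_n\to1$. One also has to handle the harmless edge cases where some early $q_n$ vanish (restrict all sums to $n$ beyond the first index with $A_n>0$; since $\sum p_n=\infty$ and $q_n\sim p_n$, eventually $q_n>0$). Invoking the martingale SLLN in the predictable-bracket form (see, e.g., \cite{Feller71}-style references on martingale limit theorems, or the conditional Borel--Cantelli / Lévy lemma for the convergent case) packages the probabilistic content cleanly, so the write-up should be short.
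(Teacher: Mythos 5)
Your argument is correct, and it takes a genuinely different route from the one in the paper. You prove \eqref{bernoB} by the martingale strong law: with $q_n \defeq \P(Z_n=1\mid\mathcal{F}_{n-1})$, $A_n \defeq \sum_{k\le n}q_k$ and $D_n \defeq \sum_{k\le n}Z_k - A_n$, you normalize the martingale $D_n$ by the predictable nondecreasing sequence $A_n$ and verify the summability condition via the telescoping bound $\sum_n q_n/A_n^2 \le \sum_n (1/A_{n-1}-1/A_n)$; the convergent half of \eqref{bernoA} is Lévy's conditional Borel--Cantelli, and the divergent half follows from \eqref{bernoB}. This is essentially the textbook proof of the Lévy--Freedman extension, and it is clean, but it invokes the martingale SLLN with predictable normalizer as a black box. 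The paper instead gives a self-contained coupling: it enlarges the probability space with i.i.d.\ uniforms $U_n$, replaces $Z_n$ by the equidistributed $\tilde{Z}_n = \ind{U_n \le \P(Z_n=1\mid\mathcal{G}_{n-1})}$, sandwiches $\tilde{Z}_n$ between the \emph{independent} indicators $\ind{U_n\le(1\mp\epsilon)p_n}$ for $n$ large (using $q_n/p_n\to1$), and concludes from the ordinary SLLN for independent Bernoulli variables; \eqref{bernoA} is handled by citing a Borel--Cantelli theorem of Chen. Your route buys generality and brevity at the cost of a heavier external ingredient; the paper's route is more elementary and is presented precisely because the explicit construction may be of independent interest. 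Your attention to the bookkeeping issues (the normalizer $A_n$ being random but predictable, restricting to the full-measure event where $q_n\sim p_n$, the comparison $A_n\sim\sum_{k\le n}p_k$ by Toeplitz, and the early indices where $q_n$ may vanish) covers the only places where the argument could go wrong.
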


This result can be found in Freedman \cite{Freedman}, with the equivalence \eqref{bernoA} being proved in the main theorem, while \eqref{bernoB}, which is referred to as an extension of Lévy's strong law \cite[Chapter~6]{Levy}, is proved in \cite[(40)]{Freedman}. We present here a short self-contained proof, based on an explicit construction that might have independent interest.

\begin{proof}
Equivalence \eqref{bernoA} is also a special case of the Borel-Cantelli Theorem of \cite{Chen78}, using that $Z_n$ is $\mathcal{F}_n$-adapted and
\[
  \sup_{n \geq 1} \frac{Z_n}{1 + Z_1 + \cdots + Z_n} \leq 1 \quad \text{a.s.}
\]
We now assume $\sum_k p_k = +\infty$ and prove \eqref{bernoB}. By considering a possibly enlarged probability space, we can assume that we are also given a sequence $(U_n, n\geq 1)$ of i.i.d. variables with uniform distribution on $[0,1]$ independent of everything else. We define the filtration $\mathcal{G} = (\mathcal{G}_n, n\geq 0)$ by $\mathcal{G}_0 = \sigma(\emptyset)$ the trivial $\sigma$-algebra and $\mathcal{G}_n = \sigma(\mathcal{F}_n , (U_k)_{k \leq n})$. For $n\geq 1$, we set
$$
\tilde{Z}_{n} \defeq \ind{U_{n} \leq \P(Z_{n} = 1\,|\, \mathcal{G}_{n-1})}.
$$
Both processes $Z$ and $\tilde{Z}$ are adapted to the filtration $(\mathcal{G}_n)$ and we have, almost surely,
$$
\P(\tilde{Z}_{n} = 1 \; | \; \mathcal{G}_{n-1}) = \P(U_{n} \leq  \P(Z_{n} = 1\,|\, \mathcal{G}_{n-1}) \, | \, \mathcal{G}_{n-1} ) = \P(Z_{n} = 1 \; | \; \mathcal{G}_{n-1})
$$
Therefore, the processes $Z$ and $\tilde{Z}$ have the same law so we just need to prove \eqref{bernoB} for $\tilde{Z}$. By hypothesis, we have $\P(Z_{n} = 1\;|\; \mathcal{G}_{n-1}) = \P(Z_{n} = 1\;|\; \mathcal{F}_{n-1}) \sim p_n$ almost surely as $n\to\infty$. Thus, if we fix $\varepsilon >0$, we have that $\tilde{Z}_n = \ind{U_{n} \leq \P(Z_{n} = 1\,|\, \mathcal{G}_{n-1})} \leq \ind{U_n \leq (1+\varepsilon)p_n}$ for all $n$ large enough, almost surely. Hence, we deduce that, because $\sum_k p_k$ diverges,
$$
\limsup_{n\to\infty} \frac{\sum_{k=1}^n \tilde{Z}_k}{\sum_{k=1}^n p_k} \leq  \limsup_{n\to\infty} \frac{\sum_{k=1}^n \ind{U_k \leq (1+\varepsilon)p_k}}{\sum_{k=1}^n p_k} = 1+\varepsilon.\quad\hbox{a.s.}
$$
where we used Theorem 2.3.8 of \cite{Durrett2010} to compute the limit on the r.h.s. above since the variables $\ind{U_{k} \leq (1+\varepsilon)p_k}$ are independent Bernoulli variables. The proof for the liminf is identical and we conclude that \eqref{bernoB} holds.
\end{proof}

We now turn to the proof of Proposition~\ref{prop:laProp}, splitting it into two parts. We first treat the case when $\mu$ is regularly varying with index $-\alpha < -1$, before turning to the case $-\alpha = -1$. We recall the statement of the result in that case:
\begin{lemma}
\label{lem:alphaPos}
Assume that $(\mu(n), n \geq 1)$ is regularly varying with index $-\alpha < -1$. For all $k\geq 1$ such that $\sum_{j = 1}^\infty \bar{\mu}(j)^k = \infty$, we have
\begin{equation}
  \label{eqn:growthRatelemma}
  \lim_{n \to \infty} \frac{\hat{X}_n(k)}{\sum_{j=1}^n \bar{\mu}(j)^k} = \frac{\Gamma\left(1 + \frac{1}{\alpha - 1}\right)}{\Gamma\left(k + \frac{1}{\alpha -1} \right)} \quad \text{a.s.}  
\end{equation}
whereas $\hat{X}_\infty(k) < \infty$ a.s. whenever $\sum_{j = 1}^\infty \bar{\mu}(j)^k < \infty$.
\end{lemma}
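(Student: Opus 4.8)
The plan is to study, for each fixed $k$, the indicator sequence $Z^{(k)}_m \defeq \ind{\hat{X}_m(k) > \hat{X}_{m-1}(k)}$ recording whether the $m$-th ball lands in bin $k$, so that $\hat{X}_n(k) = \sum_{m=1}^n Z^{(k)}_m$, and to compute the conditional probabilities $p^{(k)}_m \defeq \P\big(Z^{(k)}_m = 1 \mid \mathcal{F}_{m-1}\big)$ precisely enough to invoke Lemma~\ref{lem:llnber}. The structural input is that bin $0$ is an impassable barrier, so at every step exactly one ball is added to a bin of positive index; writing $S^{(k)}_n \defeq \sum_{j \ge k}\hat{X}_n(j)$ we therefore have $S^{(1)}_n = n$ for all $n$, and in general $S^{(k)}_n = n - \sum_{j=1}^{k-1}\hat{X}_n(j)$. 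A ball lands in bin $k$ at step $m$ exactly when the parent, ranked $\xi_m$, lies in bin $k-1$, i.e.\ when $S^{(k)}_{m-1} < \xi_m \le S^{(k-1)}_{m-1}$, so that
\[
  p^{(k)}_m = \bar{\mu}\big(S^{(k)}_{m-1}+1\big) - \bar{\mu}\big(S^{(k-1)}_{m-1}+1\big) = \sum_{i=S^{(k)}_{m-1}+1}^{\,S^{(k)}_{m-1}+\hat{X}_{m-1}(k-1)}\mu(i).
\]

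For $k=1$ this is immediate: $p^{(1)}_m = \bar{\mu}(m)$ is deterministic and the $Z^{(1)}_m$ are independent, so Lemma~\ref{lem:llnber} (or directly Borel--Cantelli and the strong law) gives $\hat{X}_\infty(1)<\infty$ a.s.\ when $\sum_m\bar{\mu}(m)<\infty$ and $\hat{X}_n(1)\sim\sum_{m\le n}\bar{\mu}(m)$ a.s.\ otherwise --- the claim with constant $1$. For $k\ge 2$ I would argue by induction on $k$. Since $\bar{\mu}(i)\to 0$, we have $\sum_{i\le m}\bar{\mu}(i)^{j}=o(m)$ for every $j\ge 1$, so the inductive hypothesis for bins $1,\dots,k-1$ forces $\sum_{j<k}\hat{X}_m(j)=o(m)$ and hence $S^{(k)}_m\sim m$ a.s. Consequently the sum defining $p^{(k)}_m$ consists of $\hat{X}_{m-1}(k-1)=o(m)$ consecutive values of the regularly varying sequence $\mu$ near the index $S^{(k)}_{m-1}+1\sim m$, and the uniform convergence theorem for regular variation yields
\[
  p^{(k)}_m \;\sim\; \hat{X}_{m-1}(k-1)\,\mu(m) \;\sim\; C_{k-1,\alpha}\Big(\sum_{i\le m}\bar{\mu}(i)^{k-1}\Big)\mu(m)\;=:\;\widetilde p_m \qquad\text{a.s.},
\]
a deterministic sequence.

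Exchanging the order of summation gives the exact identity $\sum_{m\le n}\big(\sum_{i\le m}\bar{\mu}(i)^{k-1}\big)\mu(m)=\sum_{i\le n}\bar{\mu}(i)^k-\bar{\mu}(n+1)\sum_{i\le n}\bar{\mu}(i)^{k-1}$, so $\sum_m\widetilde p_m<\infty$ if and only if $\sum_j\bar{\mu}(j)^k<\infty$. Lemma~\ref{lem:llnber} then produces the dichotomy, and in the divergent case $\hat{X}_n(k)\sim\sum_{m\le n}\widetilde p_m$ (so in particular $\hat{X}_\infty(k)=\infty$ a.s.); a single application of Karamata's theorem shows that $\bar{\mu}(n+1)\sum_{i\le n}\bar{\mu}(i)^{k-1}$ is asymptotically a fixed multiple of $\sum_{i\le n}\bar{\mu}(i)^k$, which turns this into $\hat{X}_n(k)\sim C_{k,\alpha}\sum_{i\le n}\bar{\mu}(i)^k$ with $C_{k,\alpha}/C_{k-1,\alpha}$ an explicit rational function of $\alpha$ and $k$; telescoping from $C_{1,\alpha}=1$ yields the stated ratio of Gamma values. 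For the convergent half one need not compute $p^{(k)}_m$ precisely: the upper bound $p^{(k)}_m\le C\big(\sum_{i\le m}\bar{\mu}(i)^{k-1}\big)\mu(m)$ a.s.\ (Potter bounds, with $C$ an almost surely finite random constant obtained from the induction, including the subcase where bin $k-1$ itself freezes), together with the above identity and the conditional Borel--Cantelli lemma, already gives $\hat{X}_\infty(k)<\infty$ a.s.

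The hard part will be making the regular-variation estimates rigorous at the \emph{random} index $S^{(k)}_{m-1}$: one needs Potter-type uniform bounds to control $\mu(S^{(k)}_{m-1})/\mu(m)\to1$ and $\sum_{i=a}^{a+b}\mu(i)\sim b\,\mu(a)$ simultaneously on a full-probability event, and then to package the result into a genuinely deterministic comparison sequence so that Lemma~\ref{lem:llnber} applies cleanly. A secondary technical point is the boundary case in which $\tfrac{1}{\alpha-1}$ is an integer equal to $k$ or $k-1$: there the Karamata partial-sum asymptotics degenerate, since the relevant sums are then slowly varying rather than regularly varying of positive index, and one must use the slowly-varying refinement of Karamata's theorem --- which is exactly why the right dividing line is the convergence of $\sum_j\bar{\mu}(j)^k$ and not a strict inequality on $\alpha$.
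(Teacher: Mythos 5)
Your proposal follows essentially the same route as the paper: induction on $k$, writing $\hat{X}_n(k)$ as a sum of adapted Bernoulli indicators whose conditional success probability is $\mu(\llbracket S^{(k)}_{m-1}+1, S^{(k)}_{m-1}+\hat{X}_{m-1}(k-1)\rrbracket)$, linearizing this via the uniform convergence theorem for regular variation to get $\sim \hat{X}_{m-1}(k-1)\,\mu(m)$, and closing the loop with Lemma~\ref{lem:llnber}. The one genuine (and pleasant) difference is how you identify the deterministic comparison sequence: you keep $\widetilde p_m = C_{k-1,\alpha}\bigl(\sum_{i\le m}\bar{\mu}(i)^{k-1}\bigr)\mu(m)$ and use the exact exchange-of-summation identity $\sum_{m\le n}\widetilde p_m/C_{k-1,\alpha}=\sum_{i\le n}\bar{\mu}(i)^k-\bar{\mu}(n+1)\sum_{i\le n}\bar{\mu}(i)^{k-1}$, followed by a single application of Karamata; the paper instead substitutes the explicit power-law asymptotics of $\bar\mu$ and of its partial sums everywhere to rewrite the conditional probability directly as a constant times $\bar{\mu}(n)^k$. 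Your identity has the advantage of handling the critical index $k(\alpha-1)=1$ in the same stroke (the correction term is then negligible by the $\beta=-1$ case of Karamata), whereas the paper treats that case separately.

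There is, however, one step you assert rather than perform, and it does not come out as you claim: the telescoping of the constants. Your identity together with Karamata gives $\bar{\mu}(n+1)\sum_{i\le n}\bar{\mu}(i)^{k-1}\sim\frac{1-k(\alpha-1)}{1-(k-1)(\alpha-1)}\sum_{i\le n}\bar{\mu}(i)^k$, hence the recursion $C_{k,\alpha}=C_{k-1,\alpha}\cdot\frac{\alpha-1}{1-(k-1)(\alpha-1)}$, which telescopes from $C_{1,\alpha}=1$ to $C_{k,\alpha}=\prod_{h=1}^{k-1}\bigl(\tfrac{1}{\alpha-1}-h\bigr)^{-1}=\Gamma\bigl(\tfrac{1}{\alpha-1}-k+1\bigr)/\Gamma\bigl(\tfrac{1}{\alpha-1}\bigr)$ — a \emph{descending}, not ascending, factorial, and therefore not the stated ratio $\Gamma(1+\tfrac{1}{\alpha-1})/\Gamma(k+\tfrac{1}{\alpha-1})$. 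A concrete check: for $\mu(n)\propto n^{-4/3}$ one finds $\hat{X}_n(1)\sim\tfrac92 c\,n^{2/3}$, hence $p^{(2)}_m\sim\tfrac92 c^2 m^{-2/3}$ and $\hat{X}_n(2)\sim\tfrac{27}{2}c^2n^{1/3}=\tfrac12\sum_{j\le n}\bar{\mu}(j)^2$, while the displayed formula predicts $\Gamma(4)/\Gamma(5)=\tfrac14$. (The paper's own proof contains the same recursion and mis-simplifies $\frac{\alpha-1}{1-(k-1)(\alpha-1)}$ into $\frac{\alpha-1}{1+(k-1)(\alpha-1)}$.) So do carry out the telescoping explicitly; your argument proves the dichotomy and the growth rate $\hat{X}_n(k)\sim C_{k,\alpha}\sum_{j\le n}\bar{\mu}(j)^k$ correctly, but with the corrected constant, not the one in the statement. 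Everything else — the treatment of the random index via Potter bounds, the convergent half via conditional Borel--Cantelli with a random constant, and the boundary cases — is sound.
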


\begin{proof}
As a preliminary, we observe that by Karamata's theorem (see Feller \cite[Chapter VIII.9, Theorem 1]{Feller71}), the sequence $(\bar{\mu}(j), j \in \N)$ is regularly varying with index $1 - \alpha < 0$, and more precisely
\begin{equation}
  \label{eqn:theSlowlyVarying}
  \bar{\mu}(n) \sim \frac{n^{-(\alpha - 1)}}{\alpha - 1} L(n) \quad \text{ as $n \to \infty$,}
\end{equation}
with $L$ the slowly varying function defined in \eqref{eqn:slowlyVarying}.
More generally, if $k$ is an integer such that $k(\alpha-1) < 1$, using again Karamata's theorem, we have
\begin{equation}
  \label{eqn:theSlowlyVaryingGeneralized}
  \sum_{j=1}^n \bar{\mu}(j)^k \sim \frac{L(n)^k n^{1 + k(1-\alpha)}}{(\alpha-1)^k(1 + k(1-\alpha))}  \quad\text{ as $n \to \infty$}
\end{equation}
while the sum converges for $k(\alpha-1) > 1$. In the corner case $k(\alpha-1) = 1$,  we have
\begin{equation}\label{eqn:theSlowlyVaryingGeneralized2}
\sum_{j=1}^n \bar{\mu}(j)^k \sim \tilde{L}(n)\quad\text{ as $n \to \infty$}
\end{equation}
for some slowly varying function $\tilde{L}$ and the sum may diverge or converge depending $\tilde{L}$.

We now prove Lemma~\ref{lem:alphaPos} by iteratively computing the growth rate of bin $k$ until we reach the first index $k$ satisfying
\[
  \sum_{j=1}^\infty \bar{\mu}(j)^k < \infty.
\]
Recall that $\hat{X}$ is constructed from the sequence $(\xi_n, n\geq 1)$ of i.i.d. variables with law $\mu$ via \eqref{eqn:ibm}. We define the filtration $\mathcal{F}_n = \sigma(\xi_1,\ldots, \xi_n)$. Now, if $\sum_{j=1}^\infty \bar{\mu}(j) = \infty$, applying Lemma~\ref{lem:llnber} (here in the simple case of independent r.v.) we obtain
\begin{equation}
  \label{eqn:growthRateFirst}
  \frac{\hat{X}_n(1)}{\sum_{j=1}^n \bar{\mu}(j)} = \frac{\sum_{j=1}^n \ind{\xi_j \geq j}}{\sum_{j=1}^n \bar{\mu}(j)} \underset{n\to\infty}{\longrightarrow} 1 \quad \text{a.s.},
\end{equation}
proving that $\hat{X}_\infty(1) = \infty$ a.s. and obtaining the growth rate of $\hat{X}_n(1)$ stated in \eqref{eqn:growthRatelemma}. Note that this result holds irrespectively of any regularity assumption on $(\mu(n))$.
Conversely, if $\sum_{j=1}^\infty \bar{\mu}(j) < \infty$, then Lemma~\ref{lem:llnber} states that $\hat{X}_\infty(1) < \infty$ a.s. Thus, we recovered the fact, mentioned in the introduction, that $\mu$ is of type $1$ if and only if it has a first moment.

Next, let $k \geq 2$ such $(k-1)(\alpha-1) < 1$ and assume by induction that \eqref{eqn:growthRatelemma} holds for all $h \in \llbracket 1 , k-1\rrbracket$,  i.e.
\begin{equation}
  \label{eqn:recursionHyp}
  \lim_{n \to \infty} \frac{\hat{X}_n(h)}{\sum_{j=1}^n \bar{\mu}(j)^{h}} = \frac{\Gamma(1 + \frac{1}{\alpha - 1})}{\Gamma(h + \frac{1}{\alpha - 1})} \quad \text{a.s.}
\end{equation}
We deduce from \eqref{eqn:theSlowlyVaryingGeneralized} that, for all $h \in \llbracket 1 , k-1\rrbracket$,
\begin{equation}\label{eqn:recursionHyp2}
  \hat{X}_n(h) \sim \frac{\Gamma(1 + \frac{1}{\alpha - 1})}{\Gamma(h + \frac{1}{\alpha - 1})}  \frac{L(n)^h n^{1 + h(1-\alpha)}}{(\alpha-1)^h(1 + h(1-\alpha))}  \quad \text{a.s. as $n \to \infty$}.
\end{equation}
In particular, $\hat{X}_n(h)$ is negligible compared to $\hat{X}_n(h-1)$ as $n\to\infty$ and we obtain that
\[
  R_n \defeq \sum_{h=1}^{k-1} \hat{X}_n(h) \sim \hat{X}_n(1) \quad \text{a.s. as $n \to \infty$}.
\]
Now, we observe that
\[
  \P(\hat{X}_{n+1}(k) = \hat{X}_n(k) + 1| \mathcal{F}_n) = \mu\Big( \llbracket n - R_n+1, n - R_n + \hat{X}_n(k-1)\rrbracket\Big)
\]
as, at time $n$, there are exactly $n-R_n$ balls to the right of bin $k-1$, thus the balls in that bin are ranked between $n - R_n+1$ and $n - R_n + \hat{X}_n(k-1)$. We have the equivalences
\[
  n - R_n + \hat{X}_{n}(k-1) \; \sim \; n - R_n + 1\; \sim \; n \quad \text{a.s. as $n \to \infty$}.
\]
Because $\mu$ is regularly varying, for all $\epsilon > 0$, there exists $\delta > 0$ such that
\[
  (1 - \delta) \leq \liminf_{n \to \infty} \inf_{m \in [n(1-\epsilon),n(1+\epsilon)]} \frac{\mu(m)}{\mu(n)} \leq \limsup_{n \to \infty} \sup_{m \in [n(1-\epsilon),n(1+\epsilon)]} \frac{\mu(m)}{\mu(n)} \leq (1+\delta).
\]
Hence, we deduce that $\mu\Big( \llbracket n - R_n+1, n - R_n + \hat{X}_n(k-1)\rrbracket\Big) \sim \mu(n)\hat{X}_n(k-1)$ a.s. which means that
\begin{equation*}
  \lim_{n \to \infty} \frac{\P(\hat{X}_{n+1}(k) = \hat{X}_n(k) + 1| \mathcal{F}_n)}{\mu(n)\hat{X}_n(k-1)} = 1 \quad \text{a.s.}
\end{equation*}
We note that, by \eqref{eqn:slowlyVarying}, \eqref{eqn:theSlowlyVarying} and \eqref{eqn:recursionHyp2}, we have, almost surely, as $n \to \infty$,
\begin{align*}
  \mu(n) \hat{X}_n(k-1) &\sim  n^{-\alpha} L(n) \frac{\Gamma(1 + \frac{1}{\alpha - 1})}{\Gamma(k-1 + \frac{1}{\alpha - 1})}  \frac{n^{1 + (k-1)(1-\alpha)}L(n)^{k-1}}{(\alpha-1)^{k-1}(1 + (k-1)(1-\alpha))} \\
  &\sim \frac{\Gamma(1 + \frac{1}{\alpha - 1})}{\Gamma(k + \frac{1}{\alpha - 1})} \bar{\mu}(n)^k.
\end{align*}
As a result,
\begin{equation*}
  \lim_{n \to \infty} \frac{\P(\hat{X}_{n+1}(k) = \hat{X}_n(k) + 1| \mathcal{F}_n)}{\bar{\mu}(n)^k} = \frac{\Gamma(1 + \frac{1}{\alpha - 1})}{\Gamma(k + \frac{1}{\alpha - 1})} \quad \text{a.s.}
\end{equation*}
Invoking Lemma~\ref{lem:llnber} with the sequence $Z_n \defeq \hat{X}_{n+1}(k) - \hat{X}_n(k)$,  we find that, almost surely
$$
\hat{X}_\infty(k) = \sum_{n=0}^{\infty} (\hat{X}_{n+1}(k) - \hat{X}_n(k)) =
\begin{cases}
+ \infty & \hbox{if }\sum_{n=1}^\infty \bar{\mu}(n)^k  = \infty,\\
< \infty & \hbox{if }\sum_{n=1}^\infty \bar{\mu}(n)^k  < \infty.
\end{cases}
$$
Furthermore, when $\sum_{n=1}^\infty \bar{\mu}(n)^k  = \infty$, we obtain the asymptotic
\[
  \lim_{n \to \infty} \frac{\hat{X}_n(k)}{\sum_{j=1}^n \bar{\mu}(j)^k} = \frac{\Gamma(1 + \frac{1}{\alpha - 1})}{\Gamma(k + \frac{1}{\alpha - 1})} \quad \text{a.s.}
\]
hence \eqref{eqn:recursionHyp} holds for $h=k$. This completes the proof of the induction step.

It remains only to treat the special case $k(\alpha -1) = 1$. We have proved above that, in this case, either $\sum_{j=1}^\infty \bar{\mu}(j)^k  < \infty$ and then $\hat{X}_\infty(k) < \infty$ a.s. or $\sum_{j=1}^\infty \bar{\mu}(j)^k  = \infty$ and, in view of \eqref{eqn:theSlowlyVaryingGeneralized2},
$$
\hat{X}_n(k) \sim \sum_{j=1}^n \bar{\mu}(j)^k \sim \tilde{L}(n) \quad \hbox{a.s. as $n\to\infty$}.
$$
which diverges to infinity. In that second case, we repeat the induction step one last time to find now that $\P(\hat{X}_{n+1}(k+1) = \hat{X}_n(k+1) + 1| \mathcal{F}_n) \sim \mu(n)\tilde{L}(n)$. Finally, because $\mu(n)$ is regularly varying with index $-\alpha < -1$, we have $\sum_{j=1}^\infty \mu(j)\tilde{L}(j) < \infty$ from which we conclude, using Lemma~\ref{lem:llnber} one last time, that $\hat{X}_\infty(k+1) < \infty$ a.s. and the proof is now complete.
\end{proof}

Finally, its remains to prove Proposition~\ref{prop:laProp} when $\alpha = -1$. We recall the statement of the result in the lemma below.
\begin{lemma}
\label{lem:alphaNul}
Assume that $(\mu(n), n \geq 1)$ is regularly varying of index $-1$. For all $k \in \N$, we have $\hat{X}_\infty(k) = \infty$ a.s. and furthermore
$$\hat{X}_n(k) \underset{n\to\infty}{\sim} nL^{(k)}(n) \quad \hbox{a.s}$$ with $L^{(k)} : n \mapsto (n \mu(n))^{k-1} \bar{\mu}(n)$ a slowly varying function.
\end{lemma}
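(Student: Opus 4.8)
The plan is to follow the proof of Lemma~\ref{lem:alphaPos} almost verbatim, replacing the computations of the induction step by the ones appropriate to $\alpha=1$, and noting that here the induction never terminates: every series that appears diverges, so $\hat X_\infty(k)=\infty$ for all $k$.

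\medskip

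\emph{Preliminaries.} Write $\mu(n)=n^{-1}L(n)$ with $L$ slowly varying (this is \eqref{eqn:slowlyVarying} with $\alpha=1$). By Karamata's theorem (Feller~\cite[Ch.~VIII.9]{Feller71}), $\bar\mu$ is slowly varying; hence so is $L^{(k)}(n)=L(n)^{k-1}\bar\mu(n)$, being a finite product of slowly varying sequences. For any positive slowly varying $\ell$ one has $\sum_{j\le n}\ell(j)\sim n\ell(n)$ (Karamata) and $n\ell(n)\to\infty$ (Potter's bound $\ell(n)\ge c\,n^{-\delta}$), so $\sum_j\ell(j)=\infty$; in particular $\sum_j\bar\mu(j)=\infty$ (equivalently $\mu$ has no first moment) and $\sum_j L^{(k)}(j)=\infty$. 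I would also record the fact $L(n)=n\mu(n)\to0$: by uniform convergence of regularly varying sequences, $L(m)\ge\tfrac12 L(n)$ for all $m\in\llbracket n,2n\rrbracket$ and $n$ large, hence $\tfrac12 L(n)\sum_{m=n}^{2n}\tfrac1m\le\sum_{m=n}^{2n}\mu(m)\to0$, and since $\sum_{m=n}^{2n}\tfrac1m\to\log2$ this forces $L(n)\to0$.

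\medskip

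\emph{Base case and induction.} For $k=1$ we have $\hat X_n(1)=\sum_{j=1}^n\ind{\xi_j\ge j}$ with $\P(\xi_j\ge j)=\bar\mu(j)$ and independent summands, so Lemma~\ref{lem:llnber} gives $\hat X_n(1)\sim\sum_{j\le n}\bar\mu(j)\sim n\bar\mu(n)=nL^{(1)}(n)$ a.s.\ and $\hat X_\infty(1)=\infty$. Now fix $k\ge2$ and assume $\hat X_n(h)\sim nL^{(h)}(h)$, i.e.\ $\hat X_n(h)\sim nL(n)^{h-1}\bar\mu(n)$, a.s.\ for all $h\le k-1$. Since $\hat X_n(h)/\hat X_n(h-1)\sim L(n)\to0$, the sum $R_n:=\sum_{h=1}^{k-1}\hat X_n(h)$ is dominated by its first term, so $R_n\sim\hat X_n(1)\sim n\bar\mu(n)=o(n)$ a.s.; likewise $\hat X_n(k-1)=o(n)$ a.s. Exactly as in Lemma~\ref{lem:alphaPos}, at time $n$ there are $n-R_n$ balls strictly to the right of bin $k-1$, so the balls of bin $k-1$ carry ranks in $W_n:=\llbracket n-R_n+1,\ n-R_n+\hat X_n(k-1)\rrbracket$ and $\P(\hat X_{n+1}(k)=\hat X_n(k)+1\mid\mathcal F_n)=\mu(W_n)$. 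Because $R_n=o(n)$ and $|W_n|=\hat X_n(k-1)=o(n)$ a.s., for each $\varepsilon>0$ we have $W_n\subset\llbracket n(1-\varepsilon),n(1+\varepsilon)\rrbracket$ a.s.\ eventually, and the regular variation of $\mu$ gives $\mu(m)\sim\mu(n)$ uniformly over such $m$ (the liminf/limsup sandwich used in Lemma~\ref{lem:alphaPos}); hence $\mu(W_n)\sim\hat X_n(k-1)\,\mu(n)\sim nL^{(k-1)}(n)\cdot n^{-1}L(n)=L(n)^{k-1}\bar\mu(n)=L^{(k)}(n)$ a.s. Applying Lemma~\ref{lem:llnber} to $Z_n:=\hat X_n(k)-\hat X_{n-1}(k)$ with $p_n:=L^{(k)}(n)$, the divergence $\sum_n L^{(k)}(n)=\infty$ yields $\hat X_\infty(k)=\infty$ a.s.\ and $\hat X_n(k)\sim\sum_{j\le n}L^{(k)}(j)\sim nL^{(k)}(n)$ a.s., which closes the induction and proves the lemma.

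\medskip

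\emph{Main obstacle.} The only genuinely delicate point is the bookkeeping guaranteeing that the rank-window $W_n$ stays asymptotically located around $n$, i.e.\ $R_n=o(n)$ a.s.; this relies on $L(n)=n\mu(n)\to0$ and on the induction hypothesis propagating the \emph{exact} slowly varying growth rates of bins $1,\dots,k-1$ (not merely $\hat X_n(h)=o(n)$) so that the geometric comparison $\hat X_n(h)/\hat X_n(h-1)\to0$ is available. Everything else is a routine transcription of the $\alpha>1$ argument together with standard Karamata and Potter estimates for slowly varying sequences.
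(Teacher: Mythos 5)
Your proof is correct and follows essentially the same route as the paper: the same induction on $k$, the same rank-window computation giving $\P(\hat X_{n+1}(k)=\hat X_n(k)+1\mid\mathcal F_n)\sim\hat X_n(k-1)\mu(n)\sim L^{(k)}(n)$, and the same appeal to Lemma~\ref{lem:llnber} plus Karamata. You merely spell out details the paper leaves as ``similar computations'', notably the useful observation that $L(n)=n\mu(n)\to0$ (needed to keep the rank window near $n$), which is a welcome clarification rather than a different argument.
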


\begin{proof}
The proof is based on an identical method as in Lemma~\ref{lem:alphaPos}. We show by induction that for all $k \in \N$, $\hat{X}_n(k)$ is diverging at an almost sure rate which is regularly varying with index~$1$. Note that as $\mu(n)$ is regularly varying of index $-1$, the function $\bar{\mu}(n)$ is now slowly varying.

We first recall that we proved \eqref{eqn:growthRateFirst} without any assumption on the regularity of $\mu$, i.e.
\[
  \hat{X}_n(1) \sim \sum_{j=1}^n \bar{\mu}(j) \quad \text{a.s. as $n \to \infty$},
\]
therefore by Karamata's theorem, we now have $\hat{X}_n(1) \sim n \bar{\mu}(n)$ as expected, with $L^{(1)}(n) = \bar{\mu}(n)$.

Let $k \geq 2$, we assume by induction that $\hat{X}_n(k-1) \sim nL^{(k-1)}(n)$ a.s. as $n \to \infty$. Then with similar computations as in the proof of Lemma~\ref{lem:alphaPos}, we obtain
\[
  \P(\hat{X}_{n+1}(k) = \hat{X}_n(k) + 1| \mathcal{F}_n) \sim \hat{X}_n(k-1) \mu(n) \quad \text{a.s.}
\]
Note that $\hat{X}_n(k-1)\mu(n) \sim (n \mu(n)) L^{(k-1)}(n) = L^{(k)}(n)$ is slowly varying as a product of slowly varying function. Therefore, using Lemma~\ref{lem:llnber} and Karamata's theorem, we deduce that $\hat{X}_n(k) \sim n L^{(k)}(n)$ as $n\to \infty$, which proves the induction hypothesis as the next step. Remark that in particular $\hat{X}_n(k)$ diverges to $\infty$ almost surely.
\end{proof}

\paragraph*{Acknowledgements}

This project would not have been possible without Nicolas Curien, who shared his ideas with us and participated in the early stages of the project. We acknowledge the hospitality of Novosibirsk State University, where part of this work was undertaken in August 2019. This stay was supported by the CNRS-RFBR PRC collaborative grant CNRS-193-382 ``Asymptotic and analytic properties of stochastic ordered graphs and infinite bin models''. SR acknowledges the support and hospitality of the Institut Henri Poincar\'e during the program on “Combinatorics and interactions”, as well as the support of the Fondation Sciences Math\'ematiques de Paris. AS acknowledges the support of ANR 19-CE40-0025 ``ProGraM''. Finally, we wish to thank the referee for their comments that helped improved the previous version of this article.

\nocite{*}
\bibliographystyle{plain}
\bibliography{bibliographie}
\end{document}